\def\RR{\mathbb{R}}
\def\NN{\mathbb{N}}
\def\QQ{\mathbb{Q}}
\newcommand{\m}[1]{\Vert #1 \Vert_\infty}
\DeclareMathOperator{\im}{im}
\DeclareMathOperator{\tr}{tr}
\DeclareMathOperator{\shape}{shape}
\DeclareMathOperator{\rank}{rank}
\DeclareMathOperator{\lin}{span}
\newtheorem{theorem}{Theorem}[section]
\newtheorem{lemma}[theorem]{Lemma}
\newtheorem{prop}[theorem]{Proposition}
\newtheorem{cor}[theorem]{Corollary}
\theoremstyle{definition}
\newtheorem{remark}[theorem]{Remark}
\newtheorem{ex}[theorem]{Example}
\begin{document}
\title{Archimedean classes of matrices over ordered fields}

\author{J. Cimpri\v c}

\address{University of Ljubljana, Faculty of Mathematics and Physics, Department of Mathematics, Jadranska 21, SI-1000 Ljubljana, Slovenia}

\email{cimpric@fmf.uni-lj.si}

\thanks{Supported by grant P1-0222 of the Slovenian Research Agency}

\subjclass[2010]{12J15, 13A18, 13J30, 15B33}

\keywords{ordered fields, positive semidefinite matrices, valuations and their generalizations}

\date{\today}

\begin{abstract}
Let $(F,\le)$ be an ordered field and let $A,B$ be square matrices over $F$ of the same size.
We say that $A$ and $B$ belong to the same archimedean class if there exists an integer $r$
such that the matrices $r A^T A-B^T B$ and $r B^T B-A^T A$ are positive semidefinite with respect to $\le$.
We show that this is true if and only if $A=CB$ for some invertible matrix $C$
such that all entries of $C$ and $C^{-1}$ are bounded by some integer. 
We also show that every archimedean class contains a row echelon form and that its shape 
and archimedean classes (in $F$) of its pivots  are uniquely determined.
For matrices over fields of formal Laurent series we construct a canonical representative
in each archimedean class. The set of all archimedean classes is shown to have a natural 
lattice structure while the semigroup structure does not come from matrix multiplication.
Our motivation comes from noncommutative real algebraic geometry and 
noncommutative valuation theory. 
\end{abstract}

\maketitle

\section{Introduction}

The notion of the natural valuation of an ordered field was introduced by Baer in \cite{baer}.
Through \cite{as} it motivated Krull to introduce valuations with non-archimedean value groups in \cite{krull}.
Krull's valuation theory was extended to skew-fields by Schilling \cite{schilling} and natural valuations
of ordered skew-fields were studied by Conrad \cite{conrad} and Holland \cite{holland}.
For matrices over skew-fields the valuation theory was developed in \cite{dubrovin}, \cite{morandi} and
\cite{tignol} but it can be shown that orderings exist only in the $1 \times 1$ case.
On the other hand, partial orderings also exist in other cases but their natural valuations
have only been studied in the commutative $1 \times 1$ case; see \cite{becker}. 
It would be interesting to study  other cases, too.

We will concentrate on the simplest case, namely matrices over ordered fields with transpose as involution
and positive semidefinitness as partial ordering, because the theory is already nontrivial
and the results may be of interest in linear algebra. We define a relation $\succeq_n$ on $M_n(F)$ 
by $A \succeq_n B$ if and only if there exist an integer $r$ such that $r B^T B - A^T A$ is positive semidefinite 
with respect to the ordering of $F$. This relation is reflexive and transitive, but it need not be antisymmetric. 
The corresponding equivalence relation $\sim_n$ on $M_n(F)$ defined by $A \sim_n B$ if and only if $A \succeq_n B$ and $B \succeq_n A$
is called \textit{archimedean equivalence} and the elements of the factor set $M_n(F)/\!\sim_n$
are called \textit{archimedean classes}. The canonical projection $v_n \colon M_n(F) \to M_n(F)/\!\sim_n$ 
is called the \textit{natural valuation}.

The paper is organized as follows. In section \ref{classic} we recall the construction of the natural valuation
of an ordered field and generalize it to matrices over ordered fields. Our construction does not produce a value
function in the sense of Morandi \cite{morandi}, so we carefully explain the relationship between the two.
In section \ref{secmain} we show that each archimedean class in $M_n(F)$ contains a row echelon form and that 
its shape and the natural valuations of its pivots are uniquely determined.
In section \ref{seclattice} we show that the relation $\succeq_n$ induces a lattice structure on $M_n(F)/\!\sim_n$.
In section \ref{secbounded} we characterize relations $\succeq_n$ and $\sim_n$ by divisibility.
In section \ref{seccan} we try to find a canonical representative in each archimedean class.
This works for matrices over formal Laurent series fields but not in general. In section \ref{seclast} 
we discuss different ways of introducing a semigroup structure on $M_n(F)/\!\sim_n$.

\section{Preliminaries on natural valuations}
\label{classic}

We will recall the construction of the natural valuations of an ordered field and generalize it to an ordered ring with involution.
We will illustrate our construction on the matrix ring $M_n(F)$ with transpose involution ordered in two different ways. The natural valuation of the first 
ordering is a value function in the sense of Morandi while the  natural valuation of the second ordering does not have an analogue
in noncommutative valuation theory.

\subsection{Natural valuations of ordered fields}
\label{subsecfields}

Let $(F,\le)$ be an ordered field. Clearly, $\operatorname{char} F=0$, and so $F$ contains $\QQ$. 
For every $a,b \in F$ write $a \succeq b$ iff there exists $r \in \NN$ such that 
$\vert a \vert \le r \vert b \vert$ (or equivalently, $a^2 \le r^2 b^2$). Write $a \sim b$ iff $a \succeq b$ and $b \succeq a$ and note that this relation is
a congruence on the multiplicative semigroup of $F$. The congruence classes are also called \textit{archimedean classes}
of $(F,\le)$. The factor semigroup $F/\!\sim$ is linearly ordered
by $[a] \succeq [b]$ iff $a \succeq b$. The canonical projection from $F$ to $F/\!\sim$ will be denoted
by $v$ and called the \textit{natural valuation} of $\le$. We will write $F/\!\sim\, =\Gamma \cup \{\infty\}$ where
$\infty:=\{0\}$ is the congruence class of zero and $\Gamma$ is the set of all other congruence classes. Clearly,
$\Gamma$ is an abelian group with $[a]+[b]:=[ab]$ as operation and $0:=[1]$ as neutral element. We will call it the \textit{value group} of $v$.

Let us now briefly sketch an alternative construction. We say that an element $a \in F$ is \textit{bounded} if there exists $r \in \NN$
such that $-r \le a \le r$. An element $a \in F$ is \textit{infinitesimal} if $-r \le a \le r$ for every $r \in \NN$.
The set $V$ of all bounded elements is a valuation subring of $F$ and the set $m$ of all infinitesimal elements is the only
maximal ideal of $V$. The set $U:=V \setminus m$ is a subgroup of the multiplicative group $F^\times=F \setminus \{0\}$
and the factor group $\Gamma:=F^\times /U$ is linearly ordered by $a U \succeq b U$ iff  $\frac{a}{b} \in V$.
The natural valuation of $(F,\le)$ is then the canonical projection $v \colon F^\times \to \Gamma$ 
extended to $F$ by $v(0)=\infty$ where $\infty \not\in \Gamma$ is larger from all elements from $\Gamma$.

For every ordered group $G$, the field $\RR(G))$ of formal Laurent series can be ordered by the sign of the lowest
nonzero coefficient. The corresponding value group is $G$ and the natural valuation assigns to each element the 
least element of $G$ with nonzero coefficient. Hahn's embedding theorem for ordered fields says that 
every ordered field with value group $\Gamma$ has an order-preserving embedding into $\RR((\Gamma))$.
See \cite{hahn} for the origins; a complete proof appeared much later. In \cite{kuhlmann} it is shown
that the real closure of an ordered field has a truncation closed embedding into $\RR((\bar{\Gamma}))$
(where $\bar{\Gamma}$ is the division hull of $\Gamma$) which maps the field into $\RR((\Gamma))$.

An ordered field is \textit{archimedean} iff it has no unbounded elements iff it has no nonzero infinitesimal elements
iff $\Gamma$ has only one element iff it is a subfield of $\RR$. 

\subsection{Ordered rings with involution}
The construction from the previous section can be extended to ordered rings with involution.
Let $R$ be a ring with involution $\ast$ and let $\le$ be a relation on the subset
$S := \{a \in R \mid a^\ast=a\}$ which is reflexive, transitive and satisfies $0 \le a^\ast a$ for every $a \in R$.
We define two relations  on $R$ by
\begin{center}
$a \succeq b$ iff there is  $r \in \NN$ such that $a^\ast a \le r b^\ast b$,

\smallskip

$a \sim b$ iff $a \succeq b$ and $b \succeq a$.
\end{center}
Clearly, $\succeq$ is reflexive and transitive and $\sim$ is an equivalence relation.
As above, we say that the canonical projection $v \colon R \to R /\!\sim$ is
\textit{the natural valuation} of $\le$.
Consider the equivalence class $\infty :=v(0)$.
Its complement is denoted by $\Gamma$ and called \textit{the value set} of $v$.
Whenever we work with several rings or several orderings, we will add a suitable index to $\le$, $\succeq$, $\sim$, $v$ or $\Gamma$.

Note that $v(a)=\infty$ iff $a^\ast a = 0$, so $\Gamma$ can be empty.
Lemma \ref{bad} will show that the factor set $R /\!\sim \;=\Gamma \cup \{\infty\}$ need not have any algebraic structure
because $\sim$ need not be a congruence relation on the multiplicative semigroup of $R$
and it need not be compatible with the involution.
Moreover, the partial ordering of $R /\!\sim$ induced by $\succeq$ need not be a linear ordering.

\begin{ex}
If $R$ is a partially ordered commutative ring with positive squares then $\Gamma \cup \{\infty\}$ has the structure 
of a partially ordered commutative semigroup. In particular, if $R$ is a partially ordered field with positive squares
then $\Gamma$ has the structure of a partially ordered abelian group. See \cite{becker} for additional information.
\end{ex}

Let $R$ be the ring $M_n(F)$ of all $n \times n$ matrices with transpose as involution.
Then $S$ is the vector space $S_n(F)$ of all symmetric $n \times n$ matrices. 
We will consider two orderings of $S$: $C \le D$ iff $\tr C \le \tr D$ in subsection \ref{ncvalsec}
and $C \le D$ iff $D-C$ is positive semidefinite in subsection \ref{maincasesec}. The first ordering is uncommon
but its natural valuation is the most common valuation on $R$. The second ordering is the most common
ordering of $R$ but its natural valuation is very uncommon. The rest of the paper will 
then give more details about this uncommon  valuation.

\subsection{Noncommutative valuation theory}
\label{ncvalsec}

Let $(F,\le_F)$ be an ordered field with natural valuation $v_F \colon F \to \Gamma_F \cup \{\infty\}$
and let $n$ be a natural number. We assume that $S_n(F)$ is ordered by $C \le D$ iff $\tr C \le_F \tr D$.
The corresponding ordering $\succeq$ of $M_n(F)$ is then $A \succeq B$ iff $\tr A^T A \le_F r \tr B^T B$
for some $r \in \NN$.  We will denote the natural valuation of $\le$ by $w$.

The mapping $c \to c I_n$ identifies $F$ with a subset of $M_n(F)$. Moreover, we have that
$c I_n \succeq d I_n$ iff $c \succeq_F d$. It follows that $\Gamma_F$ can be identified
with a subset of $\Gamma$. To show that $\Gamma=\Gamma_F$, it suffices to observe that
for every matrix $A=[a_{ij}] \in M_n(F)$,
\begin{equation}
\label{ncvaleq1}
A \sim \Vert A \Vert_\infty \cdot I_n,
\end{equation}
where $\Vert A \Vert_\infty:=\max_{i,j} \vert a_{ij} \vert$
is the max norm of $A$ and $I_n$ is the identity matrix.
Furthermore, relation \eqref{ncvaleq1} implies that for every $A \in M_n(F)$,
\begin{equation}
\label{ncvaleq2}
w(A)=v_F(\Vert A \Vert_\infty).
\end{equation}

Since $v_F$ is the natural valuation, $v_F(\max\{x,y\})=v_F(x+y)=\min\{v_F(x),v_F(y)\}$ for every $x,y \in F^{\ge 0}$.
It follows that for every matrix $A=[a_{ij}]$ we have that
\begin{equation}
\label{ncvaleq3}
v_F(\Vert A \Vert_\infty)=\min_{i,j} v_F(a_{ij}).
\end{equation}

Relations \eqref{ncvaleq2} and \eqref{ncvaleq3} imply that the natural valuation $w \colon M_n(F) \to \Gamma \cup \{\infty\}$
satisfies \cite[Definition 1.2]{tignol}, i.e. $w$ is a $v_F$-value function on $M_n(F)$ 
which for every $A,B \in M_n(F)$ satisfies $w(A)+w(B) \le w(AB)$ and $w(A^T A)=2 w(A)$.
By the proof of \cite[Theorem 2.2]{tignol}, $w$ is the only such function.
Therefore $w$ can be considered as the canonical extension of $v_F$ from $F$ to $M_n(F)$.

\subsection{The main example}
\label{maincasesec}

A matrix $A \in S_n(F)$ is \textit{positive semidefinite} iff $v^T A v \ge 0$ for every $v \in F^n$.
The set of all positive semidefinite matrices will be denoted by $S_n^+(F)$. 
It defines a partial ordering $\le_n$ of $S_n(F)$ by $A \le_n B$ iff $B-A \in S_n^+(F)$.
The corresponding relations on $M_n(F)$ are defined by $A \succeq_n B$ iff $A^T A \le_n r B^T B$ for some $r \in \NN$
and $A\sim_n B$ iff $A \succeq_n B$ and $B \succeq_n A$. We will call $\sim_n$ \textit{archimedean equivalence}.
The natural valuation of $\le_n$ will be denoted by $v_n \colon M_n(F) \to M_n(F)/\!\sim_n$ where
we decompose $M_n(F)/\!\sim_n$ into $\Gamma_n$  and $\infty =\{0_n\}$. 

Note that $A^T A \le r B^T B$ implies $\tr A^T A \le _F r \tr B^T B$, thus $v_n(A) \succeq_n v_n(B)$ implies $w(A) \succeq w(B)$.
Therefore, we have a mapping $\phi \colon \Gamma_n \cup \{\infty\} \to \Gamma_F \cup \{\infty\}$ which is order-preserving and 
makes the following diagram commutative:

\begin{diagram}
M_n(F) & \rTo{v_n}{} & \Gamma_n \\
\dTo{\Vert \cdot \Vert_\infty}{} &  \rdTo{w}{} & \dTo{}{\phi} \\
F & \rTo{v_F}{} & \Gamma_F
\end{diagram}

As above, the mapping $v_F(c) \mapsto v_n(c I_n)$ identifies $\Gamma_F$ with a subset of $\Gamma_n$.
In other words, we can consider $v_n$ as a refinement of $w$. Let us show that the properties of $v_n$
are much worse than the properties of $w$.

\begin{lemma}
\label{bad}
Notation from above. If $n \ge 2$, then
\begin{enumerate}
\item $\sim_n$ is not a congruence relation on the multiplicative semigroup of $M_n(F)$.
(Namely, $A \sim_n B$ always implies $AC \sim_n BC$ but not $CA \sim_n CB$.)
\item $\succeq_n$ is not a linear ordering of $\Gamma_n \cup \{\infty\}$.
\item There exists $A \in M_n(F)$ such that $v_n(A) \ne v_n(A^T)$.
\end{enumerate}
\end{lemma}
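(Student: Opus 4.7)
I will handle the three parts with $2 \times 2$ counterexamples (padded by zeros in the lower-right block if $n > 2$) together with one positive observation. Throughout I use that $\infty = \{0_n\}$: if $A \sim_n 0$ then $A^T A \le 0$, so $(A^T A)_{ii} = \sum_j a_{ji}^2 = 0$ for each $i$, forcing $A = 0$. Let $E_{ij}$ denote the standard matrix unit.

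For part (1), I would first record the half that does hold. If $A \sim_n B$ with $A^T A \le r B^T B$ and $B^T B \le r A^T A$, then the congruence $X \mapsto C^T X C$ preserves $\le_n$, since $v^T C^T (Y - X) C v = (Cv)^T (Y-X)(Cv) \ge 0$; therefore $(AC)^T(AC) \le r (BC)^T(BC)$ and the reverse, giving $AC \sim_n BC$. For the failing direction, the strategy is to pick $A, B$ with identical Gram matrices $A^T A = B^T B$ (so $A \sim_n B$ holds with $r = 1$) but different column supports, so that some $C$ annihilates one while preserving the other. Taking $A = E_{11}$ and $B = E_{21}$ gives $A^T A = B^T B = E_{11}$, and then $C = E_{11}$ yields $CA = E_{11}$ but $CB = 0$, so $v_n(CA) \ne \infty = v_n(CB)$.

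For part (2), two incomparable classes are produced by $A = E_{11}$ and $B = E_{22}$: the matrices $rB^T B - A^T A = \operatorname{diag}(-1, r)$ and $r A^T A - B^T B = \operatorname{diag}(r, -1)$ each carry a $-1$ on the diagonal for every $r$, hence are not positive semidefinite; so neither $A \succeq_n B$ nor $B \succeq_n A$ holds, and the classes of $A, B$ are incomparable in $\Gamma_n$. For part (3), I would take $A = \begin{pmatrix} 1 & 1 \\ 0 & 0 \end{pmatrix}$; then $A^T A = \begin{pmatrix} 1 & 1 \\ 1 & 1 \end{pmatrix}$ while $(A^T)^T A^T = A A^T = 2 E_{11}$, and the $(2,2)$-entry of $r A A^T - A^T A$ equals $-1$ for every $r$, so $A \not\succeq_n A^T$ and consequently $v_n(A) \ne v_n(A^T)$.

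The main obstacle is essentially notational. The lemma is a catalogue of pathologies established by small explicit examples; the only point of care is verifying that the chosen witnesses genuinely lie in distinct archimedean classes, which in every case reduces to evaluating the relevant $rY - X$ on a single standard basis vector and reading off a strictly negative value.
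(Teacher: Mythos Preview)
Your proof is correct and follows the same strategy as the paper: exhibit explicit $2 \times 2$ counterexamples and pad with zeros for larger $n$. The paper's choices differ only cosmetically --- it handles (2) and (3) simultaneously with the single matrix $A = E_{12}$, for which $A$ and $A^T$ are already incomparable --- and it omits the verification of the positive half of (1) and of $\infty = \{0_n\}$ that you supply.
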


\begin{proof}
To prove (1) for $n=2$, note that
$$\left[ \begin{array}{cc} 0 & 1 \\ 1 & 0 \end{array} \right] \sim_2 \left[ \begin{array}{cc} 1 & 0 \\ 0 & 1 \end{array} \right]$$
but 
$$\left[ \begin{array}{cc} 1 & 0 \\ 0 & 0 \end{array} \right]\left[ \begin{array}{cc} 0 & 1 \\ 1 & 0 \end{array} \right] 
\not\sim_2 \left[ \begin{array}{cc} 1 & 0 \\ 0 & 0 \end{array} \right]\left[ \begin{array}{cc} 1 & 0 \\ 0 & 1 \end{array} \right] .$$
To prove (2) and (3) for $n=2$, note that $A=\left[ \begin{array}{cc} 0 & 1 \\ 0 & 0 \end{array} \right]$ satisfies
$A \not\succeq_2 A^T$ and $A^T \not\succeq_2 A$. To get examples for larger $n$ just add some zero rows and columns.
\end{proof}

Lemma \ref{ker} characterizes relations $\succeq_n$ and $\sim_n$ for archimedean fields. Note that
\begin{equation}
\label{trace}
A \le_n (\tr A)  I_n
\end{equation}
for every $A \in S_n^+(F)$ since $(\tr A) I_n-A= \sum_{1 \le i<j \le n} (E_{ij}-E_{ji})^T A (E_{ij}-E_{ji})$.

\begin{lemma}
\label{ker}
Notation from above. Pick any $A,B \in M_n(F)$. If $A \succeq_n B$ then $\ker A \supseteq \ker B$.
If $F$ is an archimedean field then the converse is also true.
If $F$ is a non-archimedean field then the converse fails already for $n=1$.
\end{lemma}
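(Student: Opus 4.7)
The plan is to handle the three assertions separately, using the familiar facts that in an ordered field a finite sum of squares vanishes only when every summand does, and that positive semidefiniteness over $F$ is a pointwise condition on $F^n$. For the forward direction, assume $A \succeq_n B$, so $rB^TB - A^TA \in S_n^+(F)$ for some $r \in \NN$. For any $v \in \ker B$, evaluating $v^T(rB^TB - A^TA)v$ yields $-\sum_i (Av)_i^2 \geq 0$, forcing every $(Av)_i = 0$ and hence $v \in \ker A$.

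For the archimedean converse, given $\ker B \subseteq \ker A$ I would first manufacture $C \in M_n(F)$ with $A = CB$ by defining $C$ on $\im B$ via $C(Bv) := Av$ (well-defined precisely because of the kernel hypothesis) and extending by zero on any $F$-linear complement of $\im B$ in $F^n$. Then $A^TA = B^T(C^TC)B$. Since $C^TC \in S_n^+(F)$, inequality \eqref{trace} gives $C^TC \leq_n (\tr C^TC)I_n$; the archimedean property of $F$ produces an integer $r$ with $r \geq \tr C^TC$, so $C^TC \leq_n rI_n$. Conjugating by $B$ (which preserves $\leq_n$ because $w^T B^T X B w = (Bw)^T X (Bw) \geq 0$ for any $X \in S_n^+(F)$ and $w \in F^n$) yields $A^TA \leq_n rB^TB$, i.e.\ $A \succeq_n B$.

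For the final assertion, if $F$ is non-archimedean then it contains an element $t$ with $t > r$ for every $r \in \NN$ (either an unbounded element directly, or the reciprocal of a nonzero infinitesimal). Take $n = 1$, $a = t$, $b = 1$: both scalars are nonzero so $\ker a = \ker b = \{0\}$, yet $t^2 > r = rb^2$ for every $r \in \NN$, so $a \not\succeq_1 b$. I do not anticipate a serious obstacle; the only place care is needed is the construction of the factorization $A = CB$ over $F$ itself, which is secured by choosing an $F$-basis of $\im B$ and a complement inside $F^n$.
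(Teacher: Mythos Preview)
Your proof is correct and follows essentially the same route as the paper: factor $A=CB$ from the kernel inclusion, invoke the trace inequality $C^TC \le_n (\tr C^TC)I_n$, bound $\tr C^TC$ by an integer via the archimedean assumption, and for the non-archimedean case use an unbounded scalar against $1$. You spell out more detail than the paper (the explicit construction of $C$ on $\im B$ with extension to a complement, and the verification that conjugation by $B$ preserves $\le_n$), but the argument is the same.
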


\begin{proof} 
The first claim is clear. To prove the second claim note that $\ker A \supseteq \ker B$
implies that there is some $C$ of appropriate size such that $A=CB$. The trace inequality
\eqref{trace} implies that $A^T A=B^T C^T C B \le_n \tr(C^T C) B^T B$. If $F$
is an archimedean field, then $\tr C^T C$ is bounded by some natural number, so $A \succeq_n B$.
If $F$ is a non-archimedean field, then it contains some unbounded element $t$. Note that
$\ker [t] =\ker [1]=0$ but $[t] \not\succeq_1 [1]$.
\end{proof}

\section{Row echelon forms}
\label{secmain}

We will show that each archimedean class contains a row echelon form whose shape and natural valuations of its pivots are
uniquely determined.

\subsection{$QR$ decomposition} 
Let $M_{\ast,n}(F):=\bigcup_{m=1}^\infty M_{m,n}(F)$ be the set of all matrices over $F$ with $n$ columns and arbitrary many rows.
For technical reasons we will also study extensions of the relations $\succeq_n$ and $\sim_n$ from $M_n(F)$ to $M_{\ast,n}(F)$.

For every $A, B \in M_{\ast,n}(F)$ we write, as above, 
\begin{center}
$A \succeq_n B$ iff $A^T A \le_n r B^T B$ for some $r \in \NN$ and 

\smallskip

$A \sim_n B$ iff $A \succeq_n B$ and $B \succeq_n A$. 
\end{center}

By the Gram-Schmidt algorithm, every subspace of $F^n$ has an orthogonal basis with respect to the standard inner product. 
However, we do not always have an orthonormal basis unless $F$ is pythagorean. Instead, we will use normalization 
in the $\ell^\infty$-norm $\Vert v \Vert_\infty := \max_i \vert v_i \vert$. 

We say that a matrix $C$ is a \textit{row echelon form} if all zero rows of $C$ are at the bottom of $C$ and if
for each $i$ the first nonzero element in the $(i+1)$-th nonzero row is on the right-hand side of the first nonzero element 
in the $i$-th nonzero row. The first nonzero element in the $i$-th nonzero row is also called \textit{the $i$-th pivot}.

Lemma \ref{qr} is a variant of $QR$ decomposition.

\begin{lemma}
\label{qr}
For every $A \in M_{m,n}(F)$ with rank $r \ge 1$ there exists matrices $Q \in M_{m,r}(F)$ and $R \in M_{r,n}(F)$ such that
\begin{enumerate}
\item $A=QR$.
\item The columns of $Q$ are orthogonal and $\ell^\infty$-normalized.
\item $R$ is a row echelon form with positive pivots and no zero rows.
\end{enumerate}
Moreover, $Q$ and $R$ are uniquely determined by $A$. We also have that $Q \sim_r I_r$.
\end{lemma}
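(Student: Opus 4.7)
The plan is to mimic the classical Gram--Schmidt $QR$ construction, but replace $\ell^2$-normalization (unavailable since $F$ need not be pythagorean) by $\ell^\infty$-normalization. First I would identify the pivot columns of $A$ by the usual rank-profile recursion: let $j_1$ be the index of the first nonzero column of $A$, and having chosen $j_1 < \cdots < j_i$, let $j_{i+1}$ be the smallest index exceeding $j_i$ with $a_{j_{i+1}} \notin \lin\{a_{j_1},\dots,a_{j_i}\}$; the recursion terminates at $i=r$. Apply the ordinary Gram--Schmidt process to $a_{j_1},\dots,a_{j_r}$ over $F$ to get pairwise orthogonal nonzero vectors $q_1',\dots,q_r'\in F^m$, and set $q_i := q_i'/\m{q_i'}$. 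The $q_i$ form the columns of $Q$; condition (2) is then built in.

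Next, each column $a_j$ lies in $\lin\{q_1,\dots,q_r\}$, so there is a unique $R \in M_{r,n}(F)$ with $A = QR$ (the $q_i$ are linearly independent because they are nonzero and pairwise orthogonal). For $j < j_1$ we have $a_j=0$, and for $j_i \le j < j_{i+1}$ we have $a_j \in \lin\{q_1,\dots,q_i\}$, whence $r_{k,j} = 0$ for $k > i$. This is exactly the statement that $R$ is in row echelon form with no zero rows and pivot columns $j_1,\dots,j_r$. Inspecting the Gram--Schmidt formula at column $j_i$ shows that the $i$-th pivot of $R$ equals $\m{q_i'} > 0$, so (1) and (3) hold. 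For uniqueness, if $A = QR = Q'R'$ both satisfy (1)--(3), then the pivot columns of $R$ and $R'$ both coincide with the rank-profile indices $j_1 < \cdots < j_r$ of $A$ (since the columns of $Q$ and $Q'$ are linearly independent, $R$ and $R'$ inherit the rank profile of $A$). Orthogonality then forces each $q_i$ and $q_i'$ to be scalar multiples of the classical Gram--Schmidt vector attached to $a_{j_i}$, so they agree up to sign; the condition $\m{q_i}=1$ pins down $|q_i|$, and positivity of the $i$-th pivot (read off as the coefficient of $q_i$ in the expansion of $a_{j_i}$) determines the sign.

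Finally, for $Q \sim_r I_r$: since the columns of $Q$ are orthogonal, $Q^T Q$ is the diagonal matrix $\mathrm{diag}(d_1,\dots,d_r)$ with $d_i = \sum_k (q_i)_k^2$. The bound $\m{q_i}=1$ gives $1 \le d_i \le m$ (some squared coordinate equals $1$, and all are at most $1$), so
\begin{equation*}
I_r \le_r Q^T Q \le_r m\, I_r^T I_r,
\end{equation*}
which yields both $I_r \succeq_r Q$ and $Q \succeq_r I_r$. The main obstacle is the uniqueness clause: unlike the pythagorean case, $\ell^\infty$-normalization only determines $q_i$ up to sign, and the sign has to be pinned down jointly with the positivity of the pivots of $R$; keeping those two normalizations coordinated is the subtle point.
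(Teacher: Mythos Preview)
Your proposal is correct and follows essentially the same approach as the paper: identify the rank-profile columns, apply Gram--Schmidt with $\ell^\infty$-normalization in place of $\ell^2$-normalization, read off $R$ from the coefficients, and deduce $Q\sim_r I_r$ from the diagonal bound $I_r\le_r Q^TQ\le_r mI_r$. Your uniqueness argument is in fact slightly more explicit than the paper's, which proceeds column by column but leaves the sign determination implicit; your observation that the positivity of the pivot pins down the sign of each $q_i$ is exactly the missing remark.
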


\begin{proof}
Let $v_i$ be the $i$-th column of $A$ for each $i=1,\ldots,n$ and let $r$ be the rank of $A$.
Let $k_1$ be the first index such that $v_{k_1} \ne 0$, $k_2$ the first index such that $v_{k_2} \not\in \lin\{v_{k_1}\}$,
$k_3$ the first index such that $v_{k_3} \not\in\lin\{v_{k_1},v_{k_2}\}$, etc. Now  set $w_1'=v_{k_1}$ and 
$w_i' = v_{k_i}-\sum_{j=1}^{i-1} \frac{\langle v_{k_i},w_i' \rangle}{\langle w_i',w_i' \rangle} w_i'$
for $i=2,\ldots,r$. Write $w_i=w_i'/\Vert w_i' \Vert_\infty$ for $i=1,\ldots,r$ and note that $Q:=[w_1 \ldots w_r]$
satisfies (2). Now pick $c_{ij} \in F$ such that $v_j=\sum_{i=1}^r c_{ij} w_i$ for $j=1,\ldots,n$
and note that $R:=[c_{ij}]$ satisfies (1) and (3). 
The matrix $D:=Q^T Q$ is diagonal since $w_i$ are orthogonal and it satisfies $I_r \le_n D \le_n m I_n$
since $w_i$ are $\ell^\infty$-normalized. It follows that $Q \sim_n I_r$. Suppose that $Q R =  Q' R'$, 
where $Q=[w_1 \ldots w_r]$ and $Q'=[z_1 \ldots z_r]$ satisfy (1) and $R=[c_{ij}]$ and $S=[d_{ij}]$ satisfy (2).
Let $c_{i,k_i}$ and $d_{j,l_j}$ be the pivots of $R$ and $R$'.
The $t$-th column of $QR=Q'R'$ is $c_{1t} w_1+\ldots+c_{tt} w_t=d_{1t} z_1+\ldots+d_{tt} z_t$.
For $t=\min\{k_1,l_1\}$ one gets that $w_1=z_1$; for $t=\min\{k_2,l_2\}$ one gets that $w_2=z_2$; and so on.
\end{proof}

The natural embedding $j \colon M_n(F) \to M_{\ast,n}(F)$ satisfies the property $A \succeq_n B$ iff $j(A) \succeq_n j(B)$ for every $A,B \in M_n(F)$.
In other words, $j$ is an $o$-\textit{embedding}. It follows that $j$ induces a mapping $j' \colon M_n(F)/\!\sim_n \to M_{\ast,n}(F)/\!\sim_n$ which is also
an $o$-embedding. Corollary \ref{square} will show that every element of $M_{\ast,n}(F)/\!\sim_n$ contains a square matrix
which implies that the mapping $j'$ is onto. Therefore, $j'$ is an isomorphism of partially ordered sets.

\begin{cor}
\label{square}
Every archimedean class of $M_{\ast,n}(F)$ contains a square matrix which is a row echelon form.
\end{cor}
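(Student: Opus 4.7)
The plan is to apply the $QR$-type decomposition of Lemma \ref{qr} to an arbitrary $A \in M_{\ast,n}(F)$, show that the $R$-factor is archimedean equivalent to $A$, and then pad it with zero rows at the bottom to produce a square row echelon form in the same class.

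Step 1 (reduction to $R$). If $A = 0$ there is nothing to prove, so assume $A \in M_{m,n}(F)$ has rank $r \ge 1$. Lemma \ref{qr} supplies a factorization $A = QR$ with $Q \in M_{m,r}(F)$, $R \in M_{r,n}(F)$ a row echelon form with positive pivots and no zero rows, and $Q \sim_r I_r$. Unpacking $Q \sim_r I_r$ gives integers $c, d \in \NN$ such that $I_r \le c\, Q^T Q$ and $Q^T Q \le d\, I_r$ as $r \times r$ positive semidefinite matrices. Conjugating both inequalities by $R$ (using the general fact that $X \le Y$ in the positive semidefinite order implies $R^T X R \le R^T Y R$, since $v^T R^T (Y - X) R v = (Rv)^T (Y - X) (Rv) \ge 0$) yields $R^T R \le_n c\, A^T A$ and $A^T A \le_n d\, R^T R$. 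Hence $A \sim_n R$.

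Step 2 (making $R$ square). Since $r \le n$, let $\tilde R \in M_n(F)$ be obtained from $R$ by appending $n - r$ zero rows at the bottom. These zero rows contribute nothing to the Gram matrix, so $\tilde R^T \tilde R = R^T R$, and hence $\tilde R \sim_n R \sim_n A$. Since zero rows at the bottom are permitted in a row echelon form, $\tilde R$ is a square row echelon form in the archimedean class of $A$.

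There is no serious obstacle: all the analytic work has already been done in Lemma \ref{qr}, and the only points to watch are that the conjugation in Step 1 uses the correct orientation of $R$ and $R^T$, and that the padding in Step 2 relies on the rank bound $r \le n$ to enlarge (rather than truncate) $R$ to an $n \times n$ matrix.
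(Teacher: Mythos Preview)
Your proof is correct and follows essentially the same route as the paper: apply Lemma~\ref{qr}, use $Q \sim_r I_r$ to conclude $A = QR \sim_n R$, and then pad $R$ with zero rows to obtain a square row echelon form. Your Step~1 merely makes explicit the conjugation argument that the paper leaves implicit in the phrase ``It follows that $QR \sim_n R$''.
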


\begin{proof}
Pick any $A \in M_{\ast,n}(F)$ of rank $r$. The claim is clear if $r=0$. Otherwise
we can decompose $A=QR$ where $Q \sim_r I_r$ and $R$ is a row echelon form.
It follows that $QR \sim_n R$, so $R$ belongs to the archimedean class of $A$.
Since $R \sim_n \left[ \begin{array}{c} R \\ 0 \end{array} \right]$ for any number of
zero rows, the claim follows.
\end{proof}

\subsection{Shape}

For every row echelon form $C \in M_{k,n}(F)$ we define its \textit{shape}
$$\shape(C)=\{(i,j) \in \NN_k \times \NN_n \mid \exists j_0 \le j \colon c_{ij_0}  \ne 0\}.$$
where $\NN_n:=\{1,\ldots,n\}$. 
It consists of all positions that lie above the \lq\lq staircase''.

\begin{prop}
\label{shape}
Let $A,B \in M_{\ast,n}(F)$ be row echelon forms. If $A \succeq B$ then $\shape(A) \subseteq \shape(B)$.
Moreover, for every pivot position $(i,k_i)$ of $\shape(B)$ we have that $v(a_{i,k_i}) \succeq v(b_{i,k_i})$.
\end{prop}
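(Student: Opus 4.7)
The plan is to test the defining inequality $u^T A^T A u \le r\,u^T B^T B u$ (valid for some $r \in \NN$) on carefully chosen vectors $u \in F^n$. The shape inclusion will come from applying this to vectors supported on initial column blocks, while the pivot inequality will come from a single vector cooked up from the pivot columns of $B$. For $S \subseteq \{1,\ldots,n\}$, restricting to $u$ supported on $S$ immediately yields $A_S \succeq_{|S|} B_S$ for the column submatrices, so by the first part of Lemma \ref{ker} (which needs no archimedean hypothesis) $\ker B_S \subseteq \ker A_S$ and hence $\rank A_S \le \rank B_S$. Taking $S = \{1,\ldots,k\}$ and writing $\rho_C(k)$ for the number of pivots of the row echelon form $C$ in the first $k$ columns (which equals $\rank C_S$), this gives $\rho_A(k) \le \rho_B(k)$ for every $k$. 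Specializing $k = k_i^B-1$ forces $k_i^A \ge k_i^B$ whenever the $i$-th pivot of $A$ exists, and $k=n$ gives $\rank A \le \rank B$; together these are exactly $\shape(A) \subseteq \shape(B)$.

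For the pivot inequality fix a pivot $(i,k_i)$ of $B$ and write $k := k_i^B$. If $i > \rank A$ or $k_i^A > k$, the row echelon structure of $A$ forces $a_{i,k} = 0$, so $v(a_{i,k}) = \infty \succeq v(b_{i,k})$ trivially. In the substantial case $k_i^A = k$, I would construct
$$u = e_k - \sum_{l<i} c_l\, e_{k_l^B},$$
where the scalars $c_l \in F$ are determined by imposing $(Bu)_j = 0$ for $j<i$. The resulting linear system has coefficient matrix $(b_{j,k_l^B})_{1 \le j,l \le i-1}$, which is upper triangular (entries with $j>l$ vanish by row echelon in $B$) with nonzero diagonal entries $b_{l,k_l^B}$ (the pivots), so the $c_l$ exist and are unique. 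Row echelon structure then forces $Bu = b_{i,k}\,e_i$: rows $j<i$ vanish by construction, rows $j>i$ vanish because the relevant entries of $B$ are zero, and the $i$-th entry reduces to $b_{i,k}$ since $b_{i,k_l^B}=0$ for $l<i$. The \emph{same} combination $u$ also gives $(Au)_i = a_{i,k}$, because $A$'s row $i$ has its pivot at the same column $k$, hence $a_{i,k_l^B}=0$ for $l<i$. Combining with the defining inequality yields
$$a_{i,k}^2 \le \sum_l (Au)_l^2 \le r\sum_l (Bu)_l^2 = r\,b_{i,k}^2,$$
i.e. $v(a_{i,k}) \succeq v(b_{i,k})$.

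The main obstacle is the construction of the test vector. In an unreduced row echelon form the entries $b_{j,k}$ for $j<i$ above the pivot can be arbitrary, so the naive choice $u = e_k$ convolves the pivot $b_{i,k}$ with these stray entries on the $B$-side and does not separate them. The trick is to subtract combinations of earlier pivot columns in order to clear the entries above $(i,k)$ in $Bu$; what makes everything close cleanly is the observation that the \emph{same} correction kills the analogous entries of $Au$, because $A$'s row $i$ has its pivot at the identical column $k$ and hence its row $i$ vanishes on all columns $k_l^B < k$. This alignment, forced by the shape inclusion step, is what turns the pivot bound into a one-line consequence of the PSD inequality.
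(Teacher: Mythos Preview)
Your proof is correct, and in spirit it is the same idea as the paper's: test the inequality $A^TA \le_n rB^TB$ against well-chosen vectors. The packaging, however, is different in a way worth noting. For the shape inclusion, the paper performs a single global column reduction $B \mapsto \tilde B = BQ$ (Gaussian elimination on columns, so that $\tilde B$ has only its pivots left), then argues by contradiction: if $(i,j)\in\shape(A)\setminus\shape(B)$ then $j$ is a non-pivot column of $B$, so the $j$-th column of $\tilde B$ vanishes, hence so does the $j$-th column of $\tilde A = AQ$, contradicting that its $(i,j)$ entry is still $a_{ij}\neq 0$. Your rank-comparison argument via Lemma~\ref{ker} on the initial column blocks $\{1,\ldots,k\}$ is a genuinely different and arguably cleaner route: it is direct rather than by contradiction, and it isolates the combinatorial content ($\rho_A(k)\le\rho_B(k)$ for all $k$) from the linear-algebra input. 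For the pivot inequality the two arguments coincide: your vector $u = e_k - \sum_{l<i} c_l\,e_{k_l^B}$ is exactly the $k_i$-th column of the paper's $Q$, and both proofs read off $a_{i,k_i}^2 \le r\,b_{i,k_i}^2$ in the same way. One small remark: Lemma~\ref{ker} is stated for $M_n(F)$, whereas you apply it to $A_S,B_S\in M_{\ast,|S|}(F)$ with possibly different row counts; the implication $\ker B_S\subseteq\ker A_S$ is of course immediate from $A_S^TA_S \le_{|S|} rB_S^TB_S$ regardless of row counts, so this is only a matter of citation.
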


\begin{proof}
Suppose that for some row echelon forms $A,B \in M_{\ast,n}(F)$ we have that $v_n(A) \succeq v_n(B)$ but $\shape(A) \not\subseteq \shape(B)$.
Let $i$ be the smallest integer such that the $i$-th row of $\shape(A)$ is not contained in $\shape(B)$ (which can have less than $i$ rows)
and let $j$ be the smallest integer such that $(i,j) \in \shape(A)$. 
It follows that $(i,j)$ is a pivot position of $\shape(A)$ (so $a_{ij} \ne 0$) and $(i,j) \not\in \shape(B)$
and the $j$-th column of $\shape(B)$ does not contain any pivot position of $\shape(B)$ 
(otherwise $\shape(A)$ would have a step of size $\ge 2$.)

Recall that for every elementary matrix $E$, $C \to CE$ is an elementary column transformation of the matrix $C$.
Let us perform the standard Gauss algorithm on the columns of $B$. We use the first pivot to kill all other
elements in the first row of $B$, then we use the second pivot to kill all other elements in the second row of $B$ and so on.
Let $Q$ be the product of all elementary matrices that we used in this procedure. Note that $\tilde{B}=BQ$ has the same shape
and the same pivots as $B$ but all non-pivot elements are zero. In particular, the $j$-th column of $\tilde{B}$ is zero.
We claim that the $j$-th column of $\tilde{A}:=AQ$ is also zero. 
The assumption $v_n(A) \succeq_n v_n(B)$ implies that there exists $r \in \NN$ such that
$A^T A \le r B^T B$. It follows that $\tilde{A}^T \tilde{A} \le r \tilde{B}^T \tilde{B}$ which implies the claim.

Finally, note that the $(i,j)$-th element of $\tilde{A}$ is $a_{ij}$ because the $i$-th pivot of $B$ (if it exists) 
lies on the right-hand side of $a_{ij}$, and so the elementary column transformations from the product $Q$
did not act on $a_{ij}$. Since the $j$-th column of $\tilde{A}$ is zero, it follows that $a_{ij}=0$
which is not possible  by the choice of $(i,j)$. This contradiction finishes the proof of the first part.

To prove the second part, let $\tilde{A}$ and $\tilde{B}$ be as above. Write $u$ for the $k_i$-th column of $\tilde{A}$
and $v$ for the $k_i$-th column of $\tilde{B}$. As above, $\tilde{A}^T \tilde{A} \le r \tilde{B}^T \tilde{B}$ implies that
$u^T u \le r v^T v$ for some $r \in \NN$. Since $u=(u_1,\ldots,u_{i-1},a_{i,k_i},0,\ldots,0)^T$ for some $u_1,\ldots, u_{i-1} \in F$
and $v=(0,\ldots,0,b_{i,k_i},0,\ldots,0)$, it follows that $a_{i,k_i}^2 \le r b_{i,k_i}^2$, i.e. $v(a_{i,k_i}) \succeq v(b_{i,k_i})$.
\end{proof}

\begin{cor}
\label{corshape}
Let $A,B \in M_{\ast,n}(F)$ be row echelon forms. If $v_n(A) = v_n(B)$ then $\shape(A) = \shape(B)$
and for each $i$, the natural valuation of the $i$-th pivot of $A$ is equal to the natural
valuation of the $i$-th pivot of $B$.
\end{cor}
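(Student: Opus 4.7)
The plan is to derive the corollary directly from Proposition \ref{shape} by applying it in both directions, since $v_n(A)=v_n(B)$ unfolds to $A \sim_n B$, i.e.\ $A \succeq_n B$ and $B \succeq_n A$.

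First I would apply Proposition \ref{shape} to $A \succeq_n B$ to get $\shape(A) \subseteq \shape(B)$, and symmetrically apply it to $B \succeq_n A$ to get $\shape(B) \subseteq \shape(A)$. Combining these yields $\shape(A)=\shape(B)$. In particular, the pivot positions $(i,k_i)$ of $A$ and $B$ coincide, since the pivots of a row echelon form are determined by its shape (they are exactly the lower-left corners of the staircase).

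Once the pivot positions agree, I would apply the second part of Proposition \ref{shape} twice: from $A \succeq_n B$ we obtain $v(a_{i,k_i}) \succeq v(b_{i,k_i})$ for every pivot position $(i,k_i)$, and from $B \succeq_n A$ we obtain $v(b_{i,k_i}) \succeq v(a_{i,k_i})$. Hence $v(a_{i,k_i}) \sim v(b_{i,k_i})$, which gives the desired equality of natural valuations in $\Gamma_F \cup \{\infty\}$.

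There is no real obstacle: the corollary is purely formal once Proposition \ref{shape} is in hand. The only mild subtlety is to note that, for a row echelon form, the set of pivot positions is recoverable from the shape alone, so the second application of Proposition \ref{shape} is legitimately indexed by the same family $(i,k_i)$ in both directions.
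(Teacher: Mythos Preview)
Your proposal is correct and is exactly the intended argument: the paper states the corollary without proof precisely because it follows by applying Proposition~\ref{shape} in both directions to the two relations $A \succeq_n B$ and $B \succeq_n A$ encoded in $v_n(A)=v_n(B)$. The only cosmetic point is that once you have $v(a_{i,k_i}) \succeq v(b_{i,k_i})$ and $v(b_{i,k_i}) \succeq v(a_{i,k_i})$ you should conclude $v(a_{i,k_i}) = v(b_{i,k_i})$ (equality in $\Gamma_F \cup \{\infty\}$) rather than writing $\sim$ between values, since $\succeq$ is already antisymmetric on the quotient.
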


Corollary \ref{corshape} implies that we can define the shape of an archimedean class.

\section{$\Gamma_n$ is a lattice}
\label{seclattice}

We will show that the partial ordering induced by $\succeq_n$ on the value set $\Gamma_n$  is in fact a lattice ordering,
i.e. each finite subset of $\Gamma_n$ has supremum and infimum.

\subsection{Lattice structure of positive semidefinite matrices}

Let $(F, \ge)$ be an ordered field. For every $A,B \in S_n^+(F)$ write
\begin{center}
$A \sqsupseteq_n B$ iff $A \le r B$ for some $r \in \NN$
\end{center}
and
\begin{center}
$A \approx_n B$ iff $A \sqsupseteq_n B$ and $B \sqsupseteq_n A$.
\end{center}
The mapping $i \colon M_{\ast,n}(F) \to S_n^+(F), A \mapsto A^T A$ satisfies
$A \succeq_n B$ iff $i(A) \sqsupseteq_n i(B)$, i.e. $i$ is an $o$-embedding.
Therefore $i$ induces a mapping $i' \colon M_{\ast,n}(F)/\!\sim_n \to S_n^+(F)/\!\approx_n$ which is also an $o$-embedding. 
The plan is to show that $S_n^+(F)/\!\approx_n$ is a lattice and then pull this result back to $M_{\ast,n}(F)/\!\sim_n$
which is isomorphic to $\Gamma_n \cup \{\infty\}$. One can also deduce Lemma \ref{ker} from Lemma \ref{kerpsd} this way.

\begin{lemma}
\label{kerpsd}
Suppose that $A,B \in S_n^+(F)$. If $A \sqsupseteq_n B$ then also $\ker A \supseteq \ker B$.
If $F$ is an archimedean field, then we also have the converse. For non-archimedean
fields the converse already fails for $n=1$. 
\end{lemma}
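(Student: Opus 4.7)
The plan is to mirror the strategy of Lemma \ref{ker}, replacing the decomposition $A=CB$ by a congruence argument appropriate to the positive semidefinite setting. The linear-algebraic fact I record at the outset is that for every $M \in S_n^+(F)$ over an ordered field and every $v \in F^n$, $v^T M v = 0$ implies $Mv=0$: applying positive semidefiniteness to the quadratic expression $t \mapsto (tu+v)^T M (tu+v)$ yields the Cauchy--Schwarz inequality $(u^T M v)^2 \le (u^T M u)(v^T M v)$, so $v^T M v = 0$ forces $u^T M v = 0$ for every $u \in F^n$, and $Mv=0$ then follows by taking $u=Mv$. With this in hand the forward direction is immediate: if $A \sqsupseteq_n B$ and $rB-A \in S_n^+(F)$, then for $v \in \ker B$ one has $-v^T A v = v^T(rB-A)v \ge 0$ while $v^T A v \ge 0$, so $v^T A v = 0$ and hence $Av = 0$.

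For the archimedean converse, assume $F \subseteq \RR$. By Sylvester's law of inertia, a symmetric matrix with entries in $F$ is positive semidefinite over $F$ if and only if it is positive semidefinite over $\RR$, so it is enough to produce $r \in \NN$ with $rB - A$ positive semidefinite over $\RR$. I would diagonalize $B$ by congruence over $F$ (Lagrange's method) to obtain $Q^T B Q = \mathrm{diag}(d_1,\ldots,d_k,0,\ldots,0)$ with $d_1,\ldots,d_k > 0$ and $k = \rank B$. The hypothesis $\ker A \supseteq \ker B$ transfers to $\ker(Q^T A Q) \supseteq \lin\{e_{k+1},\ldots,e_n\}$, and together with the symmetry of $Q^T A Q$ this forces the block decomposition
\[
Q^T A Q = \left[\begin{array}{cc} A' & 0 \\ 0 & 0 \end{array}\right]
\]
for some $A' \in S_k^+(F)$. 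The task reduces to finding $r \in \NN$ with $r\,\mathrm{diag}(d_1,\ldots,d_k) - A'$ positive semidefinite, which over $\RR$ holds for all sufficiently large $r$ because $\mathrm{diag}(d_1,\ldots,d_k)$ is positive definite; such an $r$ can be chosen in $\NN$ since $\RR$ is itself archimedean.

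For the non-archimedean case, let $t \in F$ be any unbounded element and take $A=[t^2]$, $B=[1] \in S_1^+(F)$: both kernels are trivial, yet $A \sqsupseteq_1 B$ would require an integer $r$ with $r \ge t^2$, contradicting unboundedness of $t$. The main obstacle I anticipate is carrying out the congruence reduction entirely inside $F$, so that the archimedean hypothesis is invoked only at the very last step to produce the integer $r$; the rest of the argument is purely algebraic and works over any ordered field.
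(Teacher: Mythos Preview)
Your proof is correct and follows essentially the same route as the paper's: both arguments establish the forward implication by showing that $v^T A v = 0$ forces $Av = 0$ for $A \in S_n^+(F)$, and both handle the archimedean converse by diagonalizing $B$ via congruence over $F$, reducing to a positive-definite block and then bounding. The only noteworthy variation is that you deduce $v^T A v = 0 \Rightarrow Av = 0$ from the Cauchy--Schwarz inequality for the bilinear form $(x,y) \mapsto x^T A y$, whereas the paper diagonalizes $A$ by congruence (citing \cite[Corollary 2.4]{lam}) and argues termwise; your route is slightly more elementary. For the converse, the paper makes the integer bound explicit via the trace inequality \eqref{trace}, taking $r$ to be any integer exceeding $\tr(A')/\min_i d_i$, while you invoke positive definiteness over $\RR$ abstractly---both are fine. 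Your concern about carrying out the congruence reduction inside $F$ is unfounded: Lagrange's method works over any ordered field, exactly as you use it.
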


\begin{proof}
If $v \in \ker B$ and $A \sqsupseteq_n B$ then clearly $v^T A v=0$. By \cite[Corollary 2.4]{lam},
there exists an invertible $P \in M_n(F)$ and a diagonal $D \in S_n(F)$ with entries $d_i$ such that $A=P^T DP$.
It follows that $\sum_i d_i (Pv)_i^2=0$. Since $0 \le_n A$, also $0 \le_n D$, which implies that 
$d_i (Pv)_i=0$ for each $i$. Therefore $DPv=0$, and so $v \in \ker A$.

Suppose now that $F$ is archimedean and $\ker A \supseteq \ker B$. Pick an invertible $Q \in M_n(F)$ such that 
$E:=Q^T B Q$ is a diagonal matrix with nonzero entries $e_1,\ldots,e_k$.
Write $C:=Q^T A Q$ and note that $\ker C \supseteq \ker E$ implies that $c_{ij}=0$ if either $i>k$ or $j>k$.
It follows that $C =M \oplus 0_{n-k} \le_n (\tr M) I_k \oplus 0_{n-k} \le u D$ where $u \in \NN$
is such that $\frac{\tr M}{\min\{e_1,\ldots,e_k\}} \le u$. Therefore $A \le_n uB$.
\end{proof}

By the Gram-Schmidt algorithm (or \cite[Proposition 1.3]{lam}), every subspace $U$ of $F^n$ satisfies $U \oplus U^{\perp}=F^n$.
It follows that for every  $C \in S_n(F)$ we have that $F^n=\im C \oplus \ker C$.
We define its \textit{Moore-Penrose inverse} $C^+=(C|_{\im C})^{-1} \oplus 0|_{\ker C}$.
For every $A,B \in S_n^+(F)$ we define their \textit{parallel sum} $A:B=A(A+B)^+B$. 
The basic properties of $A:B$ from \cite{ad} carry over from $\RR$ to general ordered fields.
In particular, we have the following generalization of \cite[Corollary 21]{ad}.

\begin{lemma}
\label{parlem}
If $A,B,C \in S_n^+(F)$ and $A \le_n B$ then $A:C \le_n B:C$.
\end{lemma}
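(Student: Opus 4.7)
The plan is to establish a variational inequality for the parallel sum and then deduce monotonicity by inserting a carefully chosen test vector. Over $\RR$ the classical route is the Fenchel-type formula $x^T(A:C)x=\inf_{y+z=x}(y^TAy+z^TCz)$, but for an arbitrary ordered field I will recast this as an algebraic identity plus a one-sided inequality, so that neither infima nor limits are ever invoked.

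First I would verify the identity
\[
A:C = A - A(A+C)^+ A.
\]
Writing $C=(A+C)-A$, this reduces to $A(A+C)^+(A+C)=A$, which in turn follows from $\im A \subseteq \im(A+C)$. The latter is equivalent to $\ker(A+C)\subseteq \ker A$: if $(A+C)v=0$ then $v^TAv+v^TCv=0$, forcing $v^TAv=0$ and hence $Av=0$ (directly, or via Lemma \ref{kerpsd}).

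Next, with $\phi_A(z):=(x-z)^T A(x-z)+z^T C z = x^T A x-2z^T A x+z^T(A+C)z$ and the candidate $z^\ast_A:=(A+C)^+ Ax$, the identity $(A+C)z^\ast_A=Ax$ (again via $Ax\in\im(A+C)$) leads by a short expansion to
\[
\phi_A(z)-\phi_A(z^\ast_A) \;=\; (z-z^\ast_A)^T(A+C)(z-z^\ast_A)\;\ge\;0,
\]
together with $\phi_A(z^\ast_A)=x^T(A:C)x$. Hence $\phi_A(z)\ge x^T(A:C)x$ for every $z\in F^n$; this one-sided inequality is the algebraic substitute for the variational formula.

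Monotonicity then takes two lines. Fix $x\in F^n$ and set $z^\ast_B:=(B+C)^+ Bx$, so that $\phi_B(z^\ast_B)=x^T(B:C)x$ by the same computation applied to $B$. Apply the variational inequality for $A$ at $z=z^\ast_B$:
\[
x^T(A:C)x \;\le\; \phi_A(z^\ast_B)\;\le\; \phi_B(z^\ast_B)\;=\; x^T(B:C)x,
\]
where the middle step is just $A\le_n B$ evaluated on the vector $x-z^\ast_B$. Since $x$ was arbitrary, this yields $A:C\le_n B:C$. The only real obstacle is the book-keeping around the Moore-Penrose pseudoinverse in the two identities $A(A+C)^+(A+C)=A$ and $(A+C)z^\ast_A=Ax$; both collapse to the single fact $\im A\subseteq\im(A+C)$ recorded above, after which everything else is a direct calculation.
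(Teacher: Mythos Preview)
Your proof is correct and takes a genuinely different route from the paper's. The paper proceeds by a single algebraic identity: using $C(A+C)^+(A+C)=C=(A+C)(A+C)^+C$ (from $\im C\subseteq\im(A+C)$, the same kernel argument you invoke), it writes
\[
B{:}C - A{:}C \;=\; C(B+C)^+(B-A)(B+C)^+C \;+\; C\bigl((B+C)^+-(A+C)^+\bigr)(A+C)\bigl((B+C)^+-(A+C)^+\bigr)C,
\]
exhibiting the difference directly as a sum of two matrices of the form $X^T D X$ with $D\in S_n^+(F)$. Your argument instead ports the classical variational formula $x^T(A{:}C)x=\inf_{y+z=x}(y^TAy+z^TCz)$ to an arbitrary ordered field by replacing the infimum with the one-sided inequality $\phi_A(z)\ge x^T(A{:}C)x$, established via the completed-square identity $\phi_A(z)-\phi_A(z^\ast_A)=(z-z^\ast_A)^T(A+C)(z-z^\ast_A)$; monotonicity then follows by testing at $z^\ast_B$. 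The paper's computation is shorter and yields a matrix-level decomposition of $B{:}C-A{:}C$ as an explicit sum of positive semidefinite terms, which is slightly more informative. Your approach is more conceptual, makes transparent \emph{why} parallel sum is monotone (it is the value of a ``minimization'' in each argument), and generalizes immediately to other settings where a variational description is available; the care you take to avoid infima is exactly what is needed over a non-archimedean ordered field.
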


\begin{proof}
Note that $\ker(A+C) \subseteq \ker C$. Namely, $(A+C)x=0$ implies that
$x^T A x=x^T C x=0$ which implies that $Cx=0$ as in the proof of Lemma \ref{kerpsd}.
It follows that $\im C \subseteq \im (A+C)$, which implies that $C(A+C)^+(A+C)=C=(A+C)(A+C)^+C$.
If $A \le_n B$, these identities imply that $0 \le_n C(B+C)^+(B-A)(B+C)^+C+C((B+C)^+-(A+C)^+)(A+C)((B+C)^+-(A+C)^+)C
=C(A+C)^+C-C(B+C)^+C
=C-C(B+C)^+C-(C-C(A+C)^+C)
=B:C - A:C$.
\end{proof}

\begin{prop}
\label{lat1}
For every $[A],[B] \in S_n^+(F)/\!\approx_n$, their greatest lower bound and 
their least upper bound are given by the formulas
$$[A] \sqcap_n [B]=[A+B] \qquad \text{and} \qquad [A] \sqcup_n [B]=[A:B].$$
\end{prop}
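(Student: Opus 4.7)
My plan is to treat the meet and join separately, in each case verifying the one-sided bounds first and then the universal property. For the meet: since $B \in S_n^+(F)$ one has $A \le_n A+B = 1 \cdot (A+B)$, so $A \sqsupseteq_n A+B$; by symmetry $B \sqsupseteq_n A+B$, so $[A+B]$ is a common lower bound in the $\sqsupseteq_n$-order. If $[C]$ is any other common lower bound, then $A \le_n rC$ and $B \le_n sC$ for some $r,s \in \NN$; adding gives $A+B \le_n (r+s)C$, so $A+B \sqsupseteq_n C$, i.e.\ $[C] \sqsubseteq_n [A+B]$. This direction is entirely formal.

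For the join the upper-bound direction is quick: I will invoke the Anderson--Duffin identity $A - A{:}B = A(A+B)^+ A$, which, being of the form $X^T (A+B)^+ X$ with $X = A$, is positive semidefinite (this is one of the ``basic properties of $A{:}B$ from \cite{ad}'' that, as noted in the paper, carry over to $F$). It gives $A{:}B \le_n A$ and hence $A{:}B \sqsupseteq_n A$ with $r = 1$; the symmetry $A{:}B = B{:}A$ then yields $A{:}B \sqsupseteq_n B$ the same way.

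The one substantive step is the universal property of the join. Suppose $C \sqsupseteq_n A$ and $C \sqsupseteq_n B$, so $C \le_n rA$ and $C \le_n sB$ for some $r,s \in \NN$; I need to produce $t \in \NN$ with $C \le_n t (A{:}B)$. Rewriting the hypotheses as $A \ge_n \tfrac{1}{r}C$ and $B \ge_n \tfrac{1}{s}C$ and applying Lemma \ref{parlem} in each argument (using symmetry of the parallel sum in the second slot) gives $A{:}B \ge_n \bigl(\tfrac{1}{r}C\bigr){:}\bigl(\tfrac{1}{s}C\bigr)$. The decisive computation, which I would dispatch directly from the definition of ``${:}$'' and the Moore--Penrose identity $CC^+C = C$, is $(\alpha C){:}(\beta C) = \tfrac{\alpha\beta}{\alpha+\beta}\, C$ for $\alpha,\beta > 0$; with $\alpha = 1/r$ and $\beta = 1/s$ this reads $A{:}B \ge_n \tfrac{1}{r+s}\, C$, equivalently $C \le_n (r+s)(A{:}B)$, proving $C \sqsupseteq_n A{:}B$ with $t = r+s$. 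The only obstacle I expect to meet is verifying this scalar-multiple identity for the parallel sum; once it is in hand, everything else assembles from Lemma \ref{parlem} and the definitions.
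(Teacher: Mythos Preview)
Your proof is correct and follows essentially the same route as the paper's: the meet argument is identical, the join upper bound is the same inequality $A{:}B \le_n A$ (the paper cites \cite[Lemma 18]{ad} rather than writing out $A - A{:}B = A(A+B)^+A$), and for the universal property both arguments apply Lemma~\ref{parlem} twice and reduce to a parallel-sum computation on scalar multiples of $C$. The only cosmetic difference is that the paper first replaces $r,s$ by a common $t=\max\{r,s\}$ and then uses the special cases $C{:}C=\tfrac12 C$ and $(tA){:}(tB)=t(A{:}B)$, whereas you keep $r,s$ separate and invoke the general identity $(\alpha C){:}(\beta C)=\tfrac{\alpha\beta}{\alpha+\beta}C$; both yield $C \le_n (r+s)(A{:}B)$ (respectively $C \le_n 2t(A{:}B)$).
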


\begin{proof}
Pick $A,B,C \in S_n^+$. Since $A \le_n A+B$ and $B \le_n A+B$ we have that $A \sqsupseteq_n A+B$ and $B \sqsupseteq_n A+B$.
If $A \sqsupseteq_n C$ and $B \sqsupseteq_n C$, then $A \le r C$ and $B \le s C$ for some $r,s \in \NN$.
If follows that $A+B \le (r+s)C$, thus $A+B \sqsupseteq_n C$. This proves the first part.
To prove the second part, note that $A:B \le_n A$ and $A:B \le_n B$ by \cite[Lemma 18]{ad}, 
which implies that $A:B \sqsupseteq_n A$ and $A:B \sqsupseteq_n B$.
If $C \sqsupseteq_n A$ and $C \sqsupseteq_n B$ then $C \le tA$ and $C \le tB$ for some $t \in \NN$.
By Lemma \ref{parlem}, we have that $\frac12 C=C:C \le_n (tA):C \le_n (tA):(tB)=t(A:B)$
which implies that $C \sqsupseteq_n A:B$.
\end{proof}

For every $A,B \in S_n^+(F)$, we clearly have that $\ker(A+B)=\ker A \cap \ker B$ and by \cite[Lemma 3]{ad} we also have that $\ker(A:B)=\ker A + \ker B$.
By Lemma \ref{kerpsd}, we can define the kernel of an equivalence class by $\ker [A]:= \ker A$. By Proposition \ref{lat1}, $\ker$ is a lattice homomorphism
from $(S_n^+(F)/\approx_n,\sqcup_n,\sqcap_n)$ to $(\mathrm{Subspaces}(F^n),+,\cap)$.

\subsection{Lattice structure of rectangular matrices}

Proposition \ref{lat2} shows that $\Gamma_n \cup \{\infty\}$ is a lattice.

\begin{prop}
\label{lat2}
For every two elements $v_n(A),v_n(B) \in \Gamma_n \cup \{\infty\}$ their greatest lower bound and their least upper bound are given by the formulas
$$v_n(A) \wedge_n  v_n(B)=v_n(\left[\begin{array}{c} A \\ B \end{array} \right]) 
\text{ and } 
v_n(A) \vee_n  v_n(B)=v_n(\left[\begin{array}{c} A(A^T A+B^T B)^+ B^T B \\ B(A^T A+B^T B)^+ A^T A \end{array} \right]).$$
\end{prop}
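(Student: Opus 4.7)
The plan is to pull the lattice structure of $S_n^+(F)/\!\approx_n$ established in Proposition \ref{lat1} back along the $o$-embedding $i' \colon M_{\ast,n}(F)/\!\sim_n \to S_n^+(F)/\!\approx_n$, $v_n(X) \mapsto [X^T X]$. Since $i'$ is injective and both preserves and reflects the partial order, any element $v_n(M) \in M_{\ast,n}(F)/\!\sim_n$ whose image realises the meet (resp.\ join) of $[A^T A]$ and $[B^T B]$ in the target is automatically the meet (resp.\ join) of $v_n(A)$ and $v_n(B)$ in the source. It therefore suffices to check, for the two matrices $M_\wedge$ and $M_\vee$ appearing in the statement, that $M_\wedge^T M_\wedge \approx_n A^T A + B^T B$ and $M_\vee^T M_\vee \approx_n A^T A : B^T B$.

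For the meet, $M_\wedge = \left[\begin{array}{c} A \\ B \end{array}\right]$ gives $M_\wedge^T M_\wedge = A^T A + B^T B$ by a one-line block-matrix multiplication, and Proposition \ref{lat1} then identifies $v_n(M_\wedge)$ with $v_n(A) \wedge_n v_n(B)$.

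For the join, set $P = A^T A$, $Q = B^T B$, $R = P + Q$, and let $M_\vee$ be the matrix from the statement. Direct expansion gives
$$M_\vee^T M_\vee \;=\; Q R^+ P R^+ Q \;+\; P R^+ Q R^+ P.$$
The decisive observation is that the parallel sum $P:Q = P R^+ Q$ is symmetric, i.e.\ $P R^+ Q = Q R^+ P$; this is a standard property of parallel sums (\cite{ad}) and follows, for instance, by transposing and using that $R^+$ is symmetric. Writing $S := P R^+ Q = Q R^+ P$, the two terms collapse:
$$M_\vee^T M_\vee \;=\; Q R^+ S + P R^+ S \;=\; (P+Q) R^+ S \;=\; R R^+ S.$$
For positive semidefinite $P, Q$ we have $\ker R = \ker P \cap \ker Q$, so $\im P, \im Q \subseteq \im R$; in particular $\im S \subseteq \im P \subseteq \im R$, on which $R R^+$ acts as the identity. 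Therefore $M_\vee^T M_\vee = S = P : Q$ on the nose (not merely up to $\approx_n$), and Proposition \ref{lat1} gives $v_n(M_\vee) = v_n(A) \vee_n v_n(B)$.

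The main obstacle is recognising this algebraic collapse in the join calculation. Everything else — the reflection property of $i'$, the identity $\im(P+Q) = \im P + \im Q$ for positive semidefinite $P, Q$, and the symmetry of the parallel sum — is either automatic from the $o$-embedding or quoted from Proposition \ref{lat1} and the properties of parallel sums already invoked from \cite{ad}.
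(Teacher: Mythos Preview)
Your proof is correct and follows essentially the same approach as the paper: push through the $o$-embedding $i'$, compute $M_\wedge^T M_\wedge = A^TA + B^TB$ and $M_\vee^T M_\vee = A^TA : B^TB$, and invoke Proposition~\ref{lat1}. Your treatment of the join is in fact slightly more explicit than the paper's---you spell out the symmetry $PR^+Q = QR^+P$ and the inclusion $\im S \subseteq \im R$ that make the collapse $QR^+PR^+Q + PR^+QR^+P = RR^+S = S$ work, whereas the paper compresses this into a single displayed chain of equalities without further comment.
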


\begin{proof}
Let $i'$ be as above. By Lemma \ref{lat1}, we have that
$$i'(v_n(\left[\begin{array}{c} A \\ B \end{array} \right]))=[A^T A+B^T B]_n = [A^T A] \sqcap_n [B^T B]=i'(v_n(A)) \sqcap_n i'(v_n(B))$$
and
$$i'(v_n(\left[\begin{array}{c} A(A^T A+B^T B)^+ B^T B \\ B(A^T A+B^T B)^+ A^T A \end{array} \right]))= $$
$$= [A^T A(A^T A+B^T B)^+ B^T B (A^T A+B^T B)^+ A^T A+ \quad$$ 
$$\qquad +B^T B (A^T A+B^T B)^+ A^T A (A^T A+B^T B)^+ B^T B]_n= $$
$$= [(A^T A+B^T B)(A^T A+B^T B)^+ (A^T A : B^T B)]_n=$$ 
$$=[A^T A : B^T B]_n= [A^T A] \sqcup_n [B^T B]_n= i'(v_n(A)) \sqcup_n i'(v_n(B)).$$
Since $i'$ is an $o$-imbedding, this implies the formulas.
\end{proof}

By Propositions \ref{lat1} and \ref{lat2}, the mapping $\ker \circ i'$ is a lattice homomorphism from
$(M_{\ast,n}(F)/\!\sim_n,\vee_n,\wedge_n)$ to $(\mathrm{Subspaces}(F^n),+,\cap)$.

\begin{cor}
For every $A,B \in M_{\ast,n}(F)$ of the same size 
we have that $$v_n(A+B) \succeq v_n(A) \wedge_n v_n(B).$$
\end{cor}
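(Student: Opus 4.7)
The plan is to use the explicit formula for $\wedge_n$ from Proposition \ref{lat2} together with the parallelogram identity for the quadratic form $X \mapsto X^T X$.

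First I would apply Proposition \ref{lat2} to rewrite the claim as
\[ v_n(A+B) \succeq_n v_n\!\left(\left[\begin{array}{c} A \\ B \end{array}\right]\right). \]
Unfolding the definition of $\succeq_n$, this reduces to producing an integer $r$ such that
\[ (A+B)^T(A+B) \;\le_n\; r\cdot\left[\begin{array}{c} A \\ B \end{array}\right]^T\!\left[\begin{array}{c} A \\ B \end{array}\right] \;=\; r(A^T A + B^T B). \]

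The key observation is the identity
\[ (A+B)^T(A+B) + (A-B)^T(A-B) = 2(A^T A + B^T B), \]
obtained by direct expansion (the cross terms $A^T B + B^T A$ cancel). Since $(A-B)^T(A-B) \in S_n^+(F)$, this identity yields
\[ (A+B)^T(A+B) \;\le_n\; 2(A^T A + B^T B), \]
so the choice $r = 2$ works. This completes the proof.

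There is no real obstacle here: once the meet is expressed via the stacking formula from Proposition \ref{lat2}, the inequality is an immediate consequence of positive semidefiniteness of $(A-B)^T(A-B)$.
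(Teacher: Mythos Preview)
Your proof is correct and coincides with the paper's own argument: the paper also invokes Proposition \ref{lat2} to rewrite the meet as the stacked matrix and then uses exactly the inequality $0 \le_n (A-B)^T(A-B) = 2\left[\begin{smallmatrix} A \\ B \end{smallmatrix}\right]^T\!\left[\begin{smallmatrix} A \\ B \end{smallmatrix}\right] - (A+B)^T(A+B)$.
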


\begin{proof}
This follows from Lemma \ref{lat2} and 
$$0 \le_n (A-B)^T (A-B) =2\left[\begin{array}{c} A \\ B \end{array} \right]^T \left[\begin{array}{c} A \\ B \end{array} \right]-(A+B)^T (A+B) \qedhere$$
\end{proof}

\begin{cor}
For every $A.B \in M_{\ast,n}(F)$, we have $\shape(v_n(A)) \cup \shape(v_n(B)) \subseteq \shape (v_n(A) \wedge_2 v_n(B))$.
The inclusion can be strict.
\end{cor}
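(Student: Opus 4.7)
The proof will be essentially immediate from Proposition \ref{shape}. The plan is to use the following observation: by Proposition \ref{lat2} the meet $v_n(A)\wedge_n v_n(B)$ is represented by the stacked matrix $M=\left[\begin{array}{c}A\\B\end{array}\right]$, and because it is the greatest lower bound of $v_n(A)$ and $v_n(B)$ with respect to the partial order induced by $\succeq_n$, we automatically have $v_n(A)\succeq_n v_n(M)$ and $v_n(B)\succeq_n v_n(M)$. By Corollary \ref{square}, each of $v_n(A)$, $v_n(B)$, $v_n(M)$ contains a row echelon representative, and Corollary \ref{corshape} makes the shape well defined on archimedean classes. Proposition \ref{shape}, applied to row echelon representatives of $v_n(A)$ and of $v_n(M)$, then yields $\shape(v_n(A))\subseteq\shape(v_n(M))$, and analogously $\shape(v_n(B))\subseteq\shape(v_n(M))$; taking the union gives the claimed containment.

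For the strict inclusion I would exhibit the simplest possible witness in $M_{1,2}(F)$. Take $A=[\,1\ \ 0\,]$ and $B=[\,0\ \ 1\,]$. Both matrices are already row echelon forms, with $\shape(v_n(A))=\{(1,1),(1,2)\}$ (pivot in column $1$) and $\shape(v_n(B))=\{(1,2)\}$ (pivot in column $2$), so the union is $\{(1,1),(1,2)\}$. On the other hand, the representative $M=\left[\begin{array}{c}A\\B\end{array}\right]=I_2$ of the meet is itself a row echelon form with pivots in both columns, so $\shape(v_n(A)\wedge_n v_n(B))=\{(1,1),(1,2),(2,2)\}$. The position $(2,2)$ certifies that the inclusion is strict.

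In short, the only real content of the statement is the $\succeq_n$-monotonicity of $\shape$, which is precisely what Proposition \ref{shape} records, so there is no genuine obstacle; the only thing to watch is that one must pass to row-echelon representatives of both sides before invoking Proposition \ref{shape}, which is exactly what Corollaries \ref{square} and \ref{corshape} permit.
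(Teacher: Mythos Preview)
Your proof is correct and follows the same approach as the paper: the inclusion is deduced from Proposition~\ref{shape} using that $v_n(A)\succeq_n v_n(A)\wedge_n v_n(B)$ and $v_n(B)\succeq_n v_n(A)\wedge_n v_n(B)$, and your example for strict inclusion is essentially the paper's (the paper uses the $2\times 2$ matrix units $E_{11},E_{12}$, which are archimedean equivalent to your row vectors $[1\ 0]$ and $[0\ 1]$). Your write-up is in fact more careful than the paper's one-line justification, spelling out the passage to row echelon representatives via Corollaries~\ref{square} and~\ref{corshape}.
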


\begin{proof} 
The first part follows from Proposition \ref{shape}. To prove the second part, note that the relation
$E_{11}^T E_{11}+E_{12}^T E_{12}=I_2^T I_2$ implies that $v_2(E_{11}) \wedge_2 v_2(E_{12})=v_2(I_2)$.
Therefore, $\shape(v_2(E_{11}) \wedge_2 v_2(E_{12}))=\shape(v_2(I_2))=\{(1,1),(1,2),(2,2)\}$.
On the other hand, $\shape(v_n(A)) \cup \shape(v_n(B))=\{(1,1),(1,2)\}$.
\end{proof}

\section{Bounded and bibounded elements}
\label{secbounded}

Let $(F,\le)$ be an ordered field and $n$ an integer. 
We say that a matrix $A \in M_{\ast,n}(F)$ is \textit{bounded} (w.r.t. $\le_n$) if $A \succeq_n I_n$ where $I_n$ is the identity matrix.
We say that $A \in M_{\ast,n}(F)$ is \textit{bibounded} if $A \sim_n I_n$. We will also use this terminology for scalars and vectors which
can be identified with elements of $M_{\ast,1}(F)$. A scalar $a$ is bounded iff $v(a) \ge 0$. It is bibounded iff $v(a)=0$. 
Every vector $v$ clearly satisfies $v \sim_1 \Vert v \Vert_\infty$ where $\Vert v \Vert_\infty$ is the $\ell^\infty$ norm of $v$.
It follows that a vector is bounded iff all its components are bounded
and that a vector is bibounded iff its $\ell^\infty$ norm is bibounded. 
We can make every nonzero vector bibounded by dividing it with its $\ell^\infty$ norm.

\subsection{A characterization of $\succeq_n$ and $\sim_n$}

The aim if this section is to characterize the relations $\succeq_n$ and $\sim_n$ in terms of divisibility.

\begin{prop}
\label{div1}
For every $A \in M_{k,n}(F)$ and $B \in M_{l,n}(F)$ we have that $A \succeq_n B$ iff $A=CB$ for some bounded $C \in M_{k,l}(F)$.
\end{prop}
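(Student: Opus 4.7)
The plan is to prove the two implications separately. The backward direction is a routine congruence argument, while the forward direction is the substantive one and will be handled by constructing $C$ explicitly as $A$ times a Moore--Penrose-type pseudo-inverse of $B$, built from the orthogonal decomposition of $F^l$ into $\im B \oplus (\im B)^\perp$.

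For $(\Leftarrow)$, suppose $A = CB$ with $C$ bounded, so that $C^T C \le_l r I_l$ for some $r \in \NN$. Congruence by $B$ preserves the PSD partial order, since $v^T B^T X B v = (Bv)^T X (Bv)$; multiplying on the left by $B^T$ and on the right by $B$ therefore gives $A^T A = B^T C^T C B \le_n r B^T B$, i.e.\ $A \succeq_n B$.

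For $(\Rightarrow)$, assume $A^T A \le_n r B^T B$. First I would verify $\ker B \subseteq \ker A$: if $Bv = 0$ then $0 \le v^T A^T A v \le r v^T B^T B v = 0$, and positive semidefiniteness of $A^T A$ forces $Av = 0$. Invoking Gram--Schmidt over $F$, I take orthogonal decompositions $F^n = (\ker B)^\perp \oplus \ker B$ and $F^l = \im B \oplus (\im B)^\perp$. The restriction $B|_{(\ker B)^\perp} \colon (\ker B)^\perp \to \im B$ is then a linear bijection, so I define $B^+ \in M_{n,l}(F)$ to equal its inverse on $\im B$ and to vanish on $(\im B)^\perp$, and set $C := A B^+$. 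Since $B^+ B$ is the orthogonal projection of $F^n$ onto $(\ker B)^\perp$ and $A$ vanishes on $\ker B$, one has $CB = A B^+ B = A$.

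It then remains to bound $C^T C$. By construction $BB^+$ is the orthogonal projection of $F^l$ onto $\im B$, hence symmetric and idempotent; the complementary projection $I_l - BB^+$ onto $(\im B)^\perp$ is PSD, so $BB^+ \le_l I_l$. Conjugating $A^T A \le_n r B^T B$ by $B^+$ yields
\[
C^T C \;=\; (B^+)^T A^T A\, B^+ \;\le_l\; r\,(B^+)^T B^T B\, B^+ \;=\; r\,(BB^+)^T (BB^+) \;=\; r\,(BB^+)^2 \;=\; r\, BB^+ \;\le_l\; r I_l,
\]
so $C$ is bounded. The main technical point is ensuring that this pseudo-inverse construction is available over a general ordered field, but the existence of orthogonal complements for the standard bilinear form on $F^n$ is exactly what Gram--Schmidt supplies (as already used in Lemma~\ref{qr}), so no analytic hypothesis on $F$ is needed.
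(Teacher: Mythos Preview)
Your argument is correct and is essentially the same as the paper's: the paper decomposes $v\in F^l$ as $v=Bx+y$ with $y\in(\im B)^\perp$ and sets $Cv:=Ax$, which is precisely your $C=AB^+$, and then bounds $v^T C^T C v=x^T A^T A x\le r\,x^T B^T B x\le r\,v^T v$ directly rather than via the projection identity $(BB^+)^T BB^+=BB^+\le_l I_l$. The only difference is packaging.
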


\begin{proof}
If $A=CB$ for some bounded $C \in M_{k,l}(F)$ then  $A=CB \succeq_n I_l B =B$.

Conversely, suppose that $A \succeq_n B$. Since $F^l=\im B \oplus (\im B)^\perp$ w.r.t. to the standard inner product,
we can decompose every $v \in F^l$ as $v=Bx+y$, $y^T Bx=0$. Let us define $Cv=Ax$. To show that $C$ is well-defined note that
$Bx+y=Bx'+y'$ implies $Bx=Bx'$ and $y=y'$. Thus, $x-x' \in \ker B \subseteq \ker A$ which implies  $Ax=Ax'$.
To show that $C$ is bounded, pick any $v \in F^l$ and decompose it as $v=Bx+y$ with $y^T Bx=0$. We have that 
$v^T C^T C v= x^T A^T A x\le r x^T B^T B x \le r x^T B^T B x+r y^T y=r (x^T B^T B x+y^T B x+x^T B^T y+y^T y)=r v^T v$.
It follows that $C^T C \le_l r I_l$.
\end{proof}

\begin{prop}
\label{div2}
For every $A \in M_{k,n}(F)$ and $B \in M_{l,n}(F)$ where $k \ge l$, we have that $A \sim_n B$ iff $A=CB$ for some bibounded $C \in M_{k,l}(F)$.
\end{prop}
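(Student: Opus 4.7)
The forward implication is essentially a refinement of Proposition \ref{div1}. If $A = CB$ with $C$ bibounded, then $C$ is bounded and $A \succeq_n B$ by Proposition \ref{div1}. For $B \succeq_n A$, note that $C^T C$ is positive definite and invertible (since bibounded), so the left pseudoinverse $C^+ := (C^T C)^{-1} C^T$ satisfies $C^+ C = I_l$ and $B = C^+ A$. A direct computation shows $(C^+)^T C^+ = C(C^T C)^{-2} C^T$ is bounded: $(C^T C)^{-1}$ is bibounded, so its square is bounded above by some $c \in \NN$, and $CC^T$ shares its nonzero spectrum with the bounded $C^T C$. Hence $C^+$ is bounded and $B \succeq_n A$ again by Proposition \ref{div1}.

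For the converse, the plan is to apply Proposition \ref{div1} to produce a bounded $C_0 \in M_{k,l}(F)$ with $A = C_0 B$ and then add an error term $C_1$ with $C_1 B = 0$ so that $C := C_0 + C_1$ is bibounded. From $A \sim_n B$ and Lemma \ref{ker} one has $\ker A = \ker B$, so $s := \rank A = \rank B$. Fix an orthogonal $\ell^\infty$-normalized basis $v_1, \ldots, v_l$ of $F^l$ with $\lin\{v_1, \ldots, v_s\} = \im B$; then $V := [v_1 \mid \cdots \mid v_l]$ is invertible and bibounded, since $V^T V$ is diagonal with entries in $[1, l]$. The hypothesis $k \ge l$ gives $\dim(\im A)^\perp = k - s \ge l - s$, so we may pick orthogonal $\ell^\infty$-normalized $u_{s+1}, \ldots, u_l \in (\im A)^\perp \subseteq F^k$. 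Define $C_1$ linearly by $C_1 v_i = 0$ for $i \le s$ and $C_1 v_i = u_i$ for $i > s$; since $v_1, \ldots, v_s$ span $\im B$ one has $C_1 B = 0$ and therefore $CB = A$.

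It remains to see that $C$ is bibounded. Pick $x_i \in F^n$ with $Bx_i = v_i$ for $i \le s$ and set $X := [x_1 \mid \cdots \mid x_s]$; the proof of Proposition \ref{div1} yields $Cv_i = C_0 v_i = Ax_i$ for $i \le s$, and $Cv_i = u_i$ for $i > s$. Since the first $s$ columns of $CV$ lie in $\im A$ and the remaining $l-s$ are orthogonal to $\im A$, the Gram matrix $(CV)^T (CV)$ is block-diagonal with blocks $X^T A^T A X$ and the diagonal Gram matrix of $u_{s+1}, \ldots, u_l$. The latter is bibounded by construction. For the former, $X^T B^T B X = (BX)^T(BX)$ is the diagonal Gram matrix of $v_1, \ldots, v_s$ and hence bibounded, and since $A \sim_n B$ is preserved under the transformation $M \mapsto X^T M X$, so is $X^T A^T A X$. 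Thus $CV$ is bibounded. Because $VV^T$ shares its eigenvalues with $V^T V$, the matrix $(VV^T)^{-1}$ is bibounded as well, and the identity
\[
C^T C \;=\; (V^{-1})^T (CV)^T (CV) V^{-1}
\]
sandwiches $C^T C$ between positive integer multiples of $(VV^T)^{-1}$. Hence $C \sim_l I_l$. The main obstacle is precisely this final step: one must enlarge the rank of $C$ from $s$ up to $l$, which is where $k \ge l$ is crucially used, and one must control the result under conjugation by $V$ despite the failure of $\sim_n$ to be a multiplicative congruence (Lemma \ref{bad}). Working in the auxiliary basis diagonalising $V^T V$ circumvents this by keeping the conjugating matrix invertible and bibounded with bibounded inverse.
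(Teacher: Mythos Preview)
Your construction of $C$ in the converse is essentially the paper's: both send $Bx \mapsto Ax$ on $\im B$ and map a normalized orthogonal basis of $(\im B)^\perp$ to chosen normalized orthogonal vectors in $(\im A)^\perp$; you package this as $C_0 + C_1$ and verify biboundedness via the block structure of $(CV)^T(CV)$, whereas the paper computes $z^T C^T C z$ directly from the orthogonal decomposition $z = Bx + \sum \alpha_i u_i$. Your forward direction is more laborious than needed: the paper simply observes that $sI_l \le_l C^TC \le_l rI_l$ yields $sB^TB \le_n B^TC^TCB \le_n rB^TB$ immediately, with no pseudoinverse required.

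One caution: your appeals to ``shared spectrum'' (for $CC^T$ versus $C^TC$, and $VV^T$ versus $V^TV$) are not valid as written over a general ordered field $F$, since symmetric matrices need not have eigenvalues in $F$. The conclusions you want are nevertheless correct, but for elementary reasons already available in the paper: $CC^T$ and $VV^T$ are bounded because their entries are (Proposition~\ref{elementwise} and inequality~\eqref{trace}); $(VV^T)^{-1}$ is bounded because $V$ square and bibounded forces $V^{-1}$ bounded (Proposition~\ref{elementwise}), whence $(V^{-1})^T V^{-1} = (VV^T)^{-1}$ has bounded entries; and ``$(C^TC)^{-2} \le c I_l$'' follows from bounded entries of $(C^TC)^{-1}$ together with~\eqref{trace}, not from a spectral bound. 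With these substitutions your argument is complete.
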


\begin{proof}
Suppose that $A=CB$ for some bibounded $C \in M_{k,l}(F)$. 
Since $C \sim_l I_l$, it follows that $A=CB \sim_n I_l B =B$.

Since $s I_l \le_l C^T C \le_l r I_l$ for some $r,s \in \NN$,
it follows that $s B^T B_n \le B^T C^T C B \le_n r B^T B$ which implies that $A \sim_n B$. 

Conversely, if $A \sim_n B$, then $\ker A=\ker B$. It follows that 
$t:= \dim (\im B)^\perp = l- \dim  \im B=l -n+\dim \ker B \le k-n+\dim \ker A=\dim (\im A)^\perp =: t'$.
Pick an orthogonal basis $u_1,\ldots,u_t$ of $(\im B)^\perp$ and an orthogonal basis $v_1,\ldots,v_{t'}$ of $(\im A)^\perp$. 
Now make all $u_i$ and $v_j$ bibounded by dividing them with suitable scalars. 
Every element $z \in F^l$ can be written as $z=Bx +\sum_{i=1}^t \alpha_i u_i$. 
We define $Cz:=Ax+\sum_{i=1}^t \alpha_i v_i$. Since $\ker A=\ker B$, $C$ is well-defined and one-to-one. 

Let us show that $C$ is bibounded.
Since $A \sim_n B$ and $u_i \sim_1 1 \sim_1 v_i$ for every $i=1,\ldots,t$, we can pick $r,s \in \NN$
such that $s B^T B \le_n A^T A \le_n r B^T B$ and $s u_i^T u_i \le v_i^T v_i \le r u_i^T u_i$ for every $i=1,\ldots,t$. 
We claim that $s z^T z \le z^T C^T C z \le r z^T z$ for every $z \in F^l$ which implies that $s I_l \le C^T C \le rI_l$. 
Pick any $z=Bx+y \in F^l$ where $y=\sum \alpha_i u_i \in (\im B)^\perp$. We have that 
$z^T z =x^T B^T B x+y^T B x+x^T B^T y+y^T y=x^T B^T B x +y^T y = x^T B^T B x+\sum_{i=1}^t \alpha_i^2  u_i^T u_i$. 
On the other hand, $z^T C^T C z=x^T A^T A x+(C y)^T A x+(A x)^T C y+y^T C^T C y
=x^T A^T A x +y^T C^T C y =x^T A^T A x+\sum_{i=1}^t \alpha_i^2 v_i^T v_i$.
The claim follows.
\end{proof}

\subsection{A characterization of bounded and bibounded matrices}

We will characterize bounded and bibounded matrices in terms of their entries; see Proposition \ref{elementwise}.
We start with some preparation. The following is well-known.

\begin{lemma}
\label{invert}
If $A,B$ belong to $S_n^+(F)$ and $A \le_n B$ and $A$ is invertible, then
$B$ is also invertible, $A^{-1},B^{-1} \in S_n^+(F)$ and $B^{-1} \le_n A^{-1}$.
\end{lemma}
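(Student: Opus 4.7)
The plan is to handle the three assertions in sequence: invertibility of $B$, positive semidefiniteness of the two inverses, and the reversed inequality $B^{-1}\le_n A^{-1}$. I do not expect to need any spectral theory; everything should reduce to the diagonalization $A=P^TDP$ supplied by the cited result of Lam plus one short algebraic computation.

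First I would observe that $A$ is in fact positive definite. By \cite[Corollary 2.4]{lam} there exist an invertible $P$ and a diagonal matrix $D$ with entries $d_i$ such that $A=P^TDP$; since $A\in S_n^+(F)$ each $d_i\ge 0$, and invertibility of $A$ forces $d_i>0$ for all $i$. Then $v^TAv=\sum_i d_i (Pv)_i^2>0$ whenever $v\ne 0$. From $A\le_n B$ we get $v^TBv\ge v^TAv>0$ for every $v\ne 0$, so $B$ is also positive definite, has trivial kernel, and is therefore invertible. Membership of $A^{-1}$ and $B^{-1}$ in $S_n^+(F)$ is then immediate: for any $v$, setting $w=A^{-1}v$ gives $v^TA^{-1}v=(Aw)^TA^{-1}(Aw)=w^TAw\ge 0$, using $A^T=A$, and the same trick works for $B^{-1}$.

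The substantive step is $B^{-1}\le_n A^{-1}$. Given an arbitrary $v\in F^n$, set $w=A^{-1}v$ and $u=B^{-1}v$, so that $Aw=Bu=v$. From $A\in S_n^+(F)$ we have $0\le (w-u)^TA(w-u)$, which expands (using $A^T=A$) to
\[ w^TAw+u^TAu\ge 2u^TAw. \]
Now $u^TAw=u^T(Aw)=u^Tv=u^T(Bu)=u^TBu$, so rearranging yields $w^TAw\ge 2u^TBu-u^TAu$, and the hypothesis $A\le_n B$ applied to $u$ gives $u^TAu\le u^TBu$, whence $w^TAw\ge u^TBu$. Translating back through $v^TA^{-1}v=w^TAw$ and $v^TB^{-1}v=u^TBu$, this is exactly the desired $v^T(A^{-1}-B^{-1})v\ge 0$ for every $v\in F^n$.

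I do not anticipate any serious obstacle here: the only point requiring care is that working over a general ordered field $F$ forbids taking square roots, so one cannot reduce to the usual simultaneous diagonalization argument available over $\mathbb{R}$. The computation above avoids this entirely by working with $w-u$ in place of something like $A^{1/2}(w-u)$, and every step stays inside $F$.
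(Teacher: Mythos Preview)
Your proof is correct. The core of your argument---expanding $(w-u)^TA(w-u)\ge 0$ with $w=A^{-1}v$, $u=B^{-1}v$ and then invoking $u^T(B-A)u\ge 0$---is exactly the vector-by-vector version of the paper's single matrix identity
\[
A^{-1}-B^{-1}=(A^{-1}-B^{-1})A(A^{-1}-B^{-1})+B^{-1}(B-A)B^{-1},
\]
which exhibits $A^{-1}-B^{-1}$ directly as a sum of two matrices in $S_n^+(F)$; so the two proofs are essentially the same computation, with the paper's version packaged more compactly.
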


\begin{proof}
Note that $B$ is invertible since $A \le_n B$ implies $\ker A \supseteq \ker B$ by Lemma \ref{kerpsd}.
Now use $A^{-1}-B^{-1}=(A^{-1}-B^{-1})A(A^{-1}-B^{-1})+B^{-1}(B-A)B^{-1}$.
\end{proof}

Let us characterize the analogues of bounded and bibounded elements in $S_n^+(F)$.

\begin{lemma}
\label{auxbounded}
For every matrix $A \in S_n^+(F)$ we have the following.
\begin{enumerate}
\item $A \sqsupseteq_n I_n$ iff all diagonal entries of $A$ are bounded.
\item $A \approx_n I_n$ iff $A \sqsupseteq_n I_n$ and $\det A$ is bibounded.
\end{enumerate}
\end{lemma}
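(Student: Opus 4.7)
The key technical input for both parts is the Cauchy--Schwarz-type inequality $c_{ij}^2\le c_{ii}c_{jj}$ for every $C=[c_{ij}]\in S_n^+(F)$. This is obtained by restricting the quadratic form $v^T Cv$ to the span of $e_i$ and $e_j$: the associated $2\times 2$ principal submatrix is PSD, hence has nonnegative determinant, and in any ordered field this gives $c_{ij}^2\le c_{ii}c_{jj}$. Combined with the trace inequality \eqref{trace} and Lemma~\ref{invert}, this reduces both parts to bookkeeping on matrix entries.

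Part (1) is essentially immediate. If $A\sqsupseteq_n I_n$, then $A\le_n rI_n$ for some $r\in\NN$, so $0\le a_{ii}=e_i^T Ae_i\le r$. Conversely, if every $a_{ii}$ is bounded, choose a common bound $r\in\NN$; then $\tr A\le nr$, and \eqref{trace} gives $A\le_n (\tr A)I_n\le_n (nr)I_n$, so $A\sqsupseteq_n I_n$.

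For the forward direction of part (2), the hypothesis $I_n\sqsupseteq_n A$ gives $\frac{1}{s}I_n\le_n A$ for some $s\in\NN$, so by Lemma~\ref{invert} the matrix $A$ is invertible with $A^{-1}\le_n sI_n$, i.e.\ $A^{-1}\sqsupseteq_n I_n$. Part (1) then yields that $A^{-1}$ has bounded diagonal entries, and the Cauchy--Schwarz step applied to the PSD matrix $A^{-1}$ promotes this to \emph{all} entries of $A^{-1}$ being bounded; the same argument applied to $A$ itself shows that every entry of $A$ is bounded. The Leibniz formula then makes both $\det A$ and $\det(A^{-1})=(\det A)^{-1}$ bounded, so $\det A$ is bibounded.

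For the backward direction, assume $A\sqsupseteq_n I_n$ and $\det A$ is bibounded. As above, Cauchy--Schwarz already forces every entry of $A$ to be bounded, and $(\det A)^{-1}$ is bounded by hypothesis. Cramer's rule writes every entry of $A^{-1}$ as a polynomial in the entries of $A$ divided by $\det A$, so $A^{-1}$ has bounded entries, in particular bounded diagonal entries. Applying part (1) to the PSD matrix $A^{-1}$ gives $A^{-1}\le_n sI_n$ for some $s\in\NN$, and Lemma~\ref{invert} delivers $I_n\le_n sA$, i.e.\ $I_n\sqsupseteq_n A$, completing $A\approx_n I_n$. The only subtlety worth flagging is that the PSD Cauchy--Schwarz step must be verified directly over an arbitrary ordered field, rather than invoked through spectral theory; this is precisely what the elementary $2\times 2$ computation above supplies.
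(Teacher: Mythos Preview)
Your proof is correct. Part~(1) and the backward direction of part~(2) coincide with the paper's argument almost verbatim (cofactor/Cramer formula for $A^{-1}$, then part~(1) applied to $A^{-1}$, then Lemma~\ref{invert}). The forward direction of part~(2), however, is handled differently. The paper bounds $(\det A)^{-1}$ via the telescoping identity
\[
(\det A)^{-1}=(\det A_1)^{-1}\prod_{k=2}^{n}\frac{\det A_{k-1}}{\det A_k},
\]
observing that each ratio equals the $(k,k)$ entry of $A_k^{-1}$ and is therefore bounded once one knows $\frac{1}{s}I_k\le_n A_k$. You instead go straight to $A^{-1}$: from $\frac{1}{s}I_n\le_n A$ and Lemma~\ref{invert} you get $A^{-1}\le_n sI_n$, hence bounded diagonal entries, hence (by the PSD Cauchy--Schwarz inequality) all entries of $A^{-1}$ bounded, and then Leibniz bounds $\det(A^{-1})=(\det A)^{-1}$. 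Your route is shorter and pleasingly symmetric with the backward direction, reusing exactly the same toolkit; the paper's principal-minor telescoping gives a slightly sharper explicit bound but at the cost of an extra idea. One small point worth making explicit in your write-up is that $A^{-1}\in S_n^+(F)$ whenever $A\in S_n^+(F)$ is invertible (immediate from $v^TA^{-1}v=(A^{-1}v)^TA(A^{-1}v)\ge 0$), since you invoke part~(1) on $A^{-1}$ and that part is stated only for elements of $S_n^+(F)$.
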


\begin{proof}
One direction of claim (1) is clear and the other follows from $A \le (\tr A)I_n$.
Suppose that $A \sqsupseteq_n I_n$ and $\det A$ is bibounded. By (1) all diagonal entries of $A$ are bounded.
Since $a_{ij}^2 \le a_{ii} a_{jj}$ for all $i,j$, it follows that  nondiagonal entries of $A$ are also bounded.
Therefore all minors of $A$ are bounded. The assumption that $\det A$ is bibounded implies that $A^{-1}=(\det A)^{-1} \mathrm{Cof}(A)$
exists and all its entries are bounded. By (1) it follows that $A^{-1} \sqsupseteq_n I_n$ which implies that $I_n \sqsupseteq_n A$. 

To prove the other direction of claim (2), suppose that $I_n \sqsupseteq_n A$. 
For $k=1,\ldots,n$ write $A_k$ for the upper left $k \times k$ corner of $A$.
By assumption, there exists $s \in \NN$ such that $s I_n \le_n A$, and so $s I_k \le_n A_k$ for all $k$.
Lemma \ref{invert} implies that $A_k$ is invertible and $A_k^{-1} \le s^{-1} I_k$ for every $k$.
Thus, $\frac{\det A_{k-1}}{\det A_k} =(A_k^{-1})_{kk} \le s^{-1}$ for all $k$.
Now, $(\det A)^{-1}$ is bounded since 
$(\det A)^{-1}=(\det A_1)^{-1} \prod_{k=2}^n \frac{\det A_{k-1}}{\det A_k} \le s^{-n}$.
\end{proof}

Proposition \ref{elementwise} is a corollary of Lemma \ref{auxbounded}.

\begin{prop}
\label{elementwise}
For every element $A \in M_{m,n}(F)$ we have the following.
\begin{enumerate}
\item $A$ is bounded iff all entries of $A$ are bounded.
\item $A$ is bibounded iff $A$ is bounded, $m \ge n$, and at least one of the $n \times n$ minors of $A$ is bibounded.
\end{enumerate}
A square matrix $A$ is bibounded iff it is invertible and both $A$ and $A^{-1}$ are bounded.
\end{prop}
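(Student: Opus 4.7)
The plan is to apply Lemma \ref{auxbounded} to the symmetric matrix $A^T A \in S_n^+(F)$ and then decode the resulting conditions in terms of entries and minors of $A$. Unwinding the definitions, $A \succeq_n I_n$ is the same as $A^T A \le_n r I_n$, i.e.\ $A^T A \sqsupseteq_n I_n$, and $A \sim_n I_n$ is the same as $A^T A \approx_n I_n$. Thus $A$ bounded iff $A^T A \sqsupseteq_n I_n$ and $A$ bibounded iff $A^T A \approx_n I_n$, which reduces (1) and (2) to parts (1) and (2) of Lemma \ref{auxbounded} applied to $A^T A$.

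For (1): the $i$-th diagonal entry of $A^T A$ is $\sum_{j=1}^m a_{ji}^2$, a sum of nonneg elements in $F$. Such a sum is bounded iff every summand is (a bound on the sum bounds each term, and conversely a common entrywise bound $N$ gives $\sum a_{ji}^2 \le mN^2$), so the diagonal-entries condition from Lemma \ref{auxbounded}(1) translates to all $a_{ji}$ being bounded.

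For (2): it remains to characterize when $\det(A^T A)$ is bibounded given that $A$ is bounded. Here I would invoke the Cauchy--Binet identity
$$\det(A^T A) = \sum_{|S|=n}(\det A_S)^2,$$
with $S$ ranging over $n$-element subsets of $\{1,\dots,m\}$ and $A_S$ the corresponding $n\times n$ submatrix. If $m<n$ this sum is empty, so $\det(A^T A)=0$ and $A$ cannot be bibounded; hence $m\ge n$ is necessary. When $m\ge n$ and $A$ is bounded, each $\det A_S$ is a polynomial expression in bounded quantities, so every $(\det A_S)^2$ is bounded. The natural valuation $v_F$ of an ordered field satisfies $v_F(x+y) = \min\{v_F(x),v_F(y)\}$ whenever $x,y\ge 0$, so by induction a finite sum of bounded nonneg elements is bibounded iff at least one summand is; applied to the Cauchy--Binet sum this gives the desired criterion. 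The square-matrix assertion is then immediate: when $m=n$ the only $n\times n$ minor is $\det A$, so bibounded is equivalent to $A$ bounded with $\det A$ bibounded; in that case $A$ is invertible and the entries of $A^{-1} = (\det A)^{-1}\operatorname{adj}(A)$ are bounded (a bibounded scalar times $(n{-}1)\times(n{-}1)$ minors of $A$), while conversely if $A$ is invertible with $A$ and $A^{-1}$ both bounded then $\det A$ and $(\det A)^{-1} = \det(A^{-1})$ are bounded, forcing $\det A$ bibounded.

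There is no serious obstacle: the proposition is essentially a corollary of Lemma \ref{auxbounded} once Cauchy--Binet (an identity valid over any commutative ring) is in hand. The only steps requiring a sentence of care are the $m\ge n$ clause, forced by the Cauchy--Binet sum being empty otherwise, and the ``sum of nonneg is bibounded iff some summand is'' step, which rests only on the ultrametric behaviour of $v_F$ on positive sums.
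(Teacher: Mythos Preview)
Your proof is correct and follows essentially the same route as the paper: reduce to Lemma~\ref{auxbounded} applied to $A^T A$, read off the diagonal entries as column sums of squares for (1), and invoke Cauchy--Binet for $\det(A^T A)$ to handle (2) and the $m\ge n$ clause. Your treatment of the square case via the adjugate is slightly more explicit than the paper's, but the argument is the same.
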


\begin{proof}
The diagonal entries of $A^T A$ are $\sum_{j=1}^n a_{ij}^2$ where $i=1,\ldots,n$. They are bounded iff all $a_{ij}$ are bounded.
Claim (1) now follows from Lemma \ref{auxbounded}. To prove claim (2) note that, by Lemma \ref{auxbounded}, a matrix 
$A \in M_{m,n}(F)$ is bibounded iff it is bounded and $\det A^T A$ is bibounded. If $m <n$, then $\det A^T A=0$, so it is not bibounded.
The Binet-Cauchy theorem (see \cite{Ga}, for example) implies that $\det A^T A$ is equal to the sum of squares of all $n \times n$ minors of $A$.
It follows that $\det A^T A$ is bibounded iff the $n \times n$ minor of $A$ of the largest absolute value is bibounded
iff at least one of the $n \times n$ minors of $A$ is bibounded. 
\end{proof}

Proposition \ref{elementwise} implies that a matrix $A \in M_n(F)$ is bounded iff $w(A) \ge 0$. 
However, condition $w(A)=0$ is not equivalent to biboundedness.

\subsection{Bibounded elementary matrices}

Recall that the Gauss algorithm consists of a series of elementary row tranformations that can be represented
as left multiplications by elementary matrices $E_{ij}(\alpha)$, $E_i(\alpha)$ and $P_{ij}$ where  $\alpha \in F$.

\begin{lemma}
\label{elmat}
For every $\alpha \in F$, we have the following.
\begin{enumerate}
\item A matrix of the form $E_{ij}(\alpha)$ is bibounded iff $\alpha$ is bounded.
\item A matrix of the form $E_i(\alpha)$ is bibounded if $\alpha$ is bibounded.
\item A matrix of the form $P_{ij}$ is always bibounded.
\end{enumerate}
\end{lemma}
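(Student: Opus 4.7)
The plan is to deduce all three claims directly from the characterization in Proposition \ref{elementwise}: a square matrix is bibounded iff it is invertible and both it and its inverse have only bounded entries. So for each type of elementary matrix I will simply compute the inverse, inspect which entries depend on $\alpha$ in the matrix and in the inverse, and apply the proposition.

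For part (1), I note that $E_{ij}(\alpha)$ (with $i \ne j$) differs from $I_n$ only by an extra $\alpha$ in position $(i,j)$, and it is always invertible with $E_{ij}(\alpha)^{-1} = E_{ij}(-\alpha)$. The entries of both $E_{ij}(\alpha)$ and $E_{ij}(-\alpha)$ are either $0$, $1$, $\alpha$, or $-\alpha$. All entries are bounded iff $\alpha$ is bounded (using that $-\alpha$ is bounded iff $\alpha$ is bounded, since boundedness is defined via $|\cdot|$). By Proposition \ref{elementwise}, $E_{ij}(\alpha)$ is bibounded iff $\alpha$ is bounded.

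For part (2), $E_i(\alpha)$ differs from $I_n$ only in the $(i,i)$ entry, which equals $\alpha$. It is invertible iff $\alpha \ne 0$, in which case $E_i(\alpha)^{-1} = E_i(\alpha^{-1})$. Both matrices have all entries in $\{0,1,\alpha,\alpha^{-1}\}$, so they are entrywise bounded iff both $\alpha$ and $\alpha^{-1}$ are bounded, i.e.\ iff $\alpha$ is bibounded. Again Proposition \ref{elementwise} gives the claim (and in fact yields the ``only if'' direction as well, even though the statement asks only for ``if''). For part (3), $P_{ij}$ is a permutation matrix with entries in $\{0,1\}$, and $P_{ij}^{-1} = P_{ij}$, so both have only bounded entries and Proposition \ref{elementwise} again applies.

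I do not expect any real obstacle here: once Proposition \ref{elementwise} is available, the whole lemma reduces to writing down inverses of three very explicit $3$-parameter families of matrices and checking which of their entries depend on $\alpha$. The only subtlety worth flagging is to verify that boundedness is closed under negation and reciprocation in the appropriate sense ($\alpha$ bounded $\Leftrightarrow -\alpha$ bounded, and $\alpha$ bibounded $\Leftrightarrow \alpha^{-1}$ bibounded), both of which are immediate from the definition in Subsection \ref{subsecfields}.
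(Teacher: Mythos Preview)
Your proposal is correct and follows essentially the same approach as the paper's own proof: both arguments invoke Proposition~\ref{elementwise} (the invertible-with-bounded-inverse characterization of biboundedness), compute the explicit inverses $E_{ij}(\alpha)^{-1}=E_{ij}(-\alpha)$, $E_i(\alpha)^{-1}=E_i(\alpha^{-1})$, $P_{ij}^{-1}=P_{ij}$, and then inspect the entries. Your remark that the argument in~(2) actually yields an ``iff'' is also correct, even though the statement as written only asserts one direction.
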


\begin{proof}
(1) By Lemma \ref{elementwise}, $E_{ij}(\alpha)$ is bounded iff $\alpha$ is bounded. Since $E_{ij}(\alpha)^{-1}=E_{ij}(-\alpha)$ and 
$\alpha$ is bounded iff $-\alpha$ is bounded, the claim follows.

(2) By Lemma \ref{elementwise}, $E_i(\alpha)$ is bounded iff $\alpha$ is bounded. Since $E_i(\alpha)^{-1}=E_i(1/\alpha)$,
the claim follows from the definition of a bibounded element of $F$.

(3) Follows from Lemma \ref{elementwise}.
\end{proof}

We already know that for every $A \in M_{k,n}(F)$ there exists a bibounded $Q \in M_k(F)$ such that $QA$ is a row echelon form.
We can also prove this result by a bibounded version of Gauss algorithm.

If the first column of $A$ contains only zeros then move to the next column.
Otherwise pick in the first column an element of the largest absolute value (the first pivot) and move it to the first row by an appropriate $P_{1j}$.
Now kill all elements below the pivot by  $E_{j1}(-a_{j1}/a_{11})$ where $j=2,\ldots,n$. By the choice
of the pivot, the elements $-a_{j1}/a_{11}$ are bounded. Thus the matrices $E_{j1}(-a_{j1}/a_{11})$ are bibounded.
Finally move to the next column. If all elements from $a_{22}$ to $a_{n2}$ are zero, then move to the next column.
Otherwise pick an element of the largest absolute value (the second pivot) and move it to the second row by an appropriate $P_{2j}$.
Now kill all elements below the pivot with $E_{j2}(-a_{j2}/a_{22})$, $j=3,\ldots,n$, which are bibounded by the choice of the pivot.
Finally, move to the next column. Continue until you run out of columns.

The elements above a pivot cannot be killed unless their valuation is greater or equal to the valuation of the pivot.
This is already clear on the matrix $E_{12}(a)$. However, if $A$ is bibounded and square, then the corresponding row echelon form
is also bibounded and square. By Lemma \ref{elementwise} this can only happen if it has bounded entries and
bibounded determinant. It follows that its diagonal entries are also bibounded. Therefore we can use a bibounded version of
Gauss algorithm to kill all elements above the diagonal. Finally, a bibounded diagonal matrix is clearly a product
of bibounded $E_i(\alpha)$. This proves Proposition \ref{elbiprod}.

\begin{prop}
\label{elbiprod}
A square matrix over $F$ is bibounded iff it is a product of bibounded elementary matrices.
\end{prop}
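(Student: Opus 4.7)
The plan is to prove both directions of the equivalence separately, with nearly all the work in the ``only if'' direction.

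\textbf{The ``if'' direction.} First I would note that the collection of bibounded square matrices is closed under multiplication. Indeed, by Lemma \ref{bad}(1) the relation $\sim_n$ is compatible with right multiplication, so if $A \sim_n I_n$ and $B \sim_n I_n$ then $AB \sim_n I_n B = B \sim_n I_n$ by transitivity. Hence any product of bibounded elementary matrices is bibounded.

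\textbf{The ``only if'' direction, downward pass.} Let $A \in M_n(F)$ be bibounded, so in particular invertible by Proposition \ref{elementwise}. I would execute the downward ``bibounded Gauss'' procedure sketched in the paragraph above the proposition: within each column one chooses as pivot an entry of maximal absolute value among the relevant rows (so every ratio $-a_{ij}/a_{jj}$ used as an eliminator has absolute value at most one, hence is bounded), moves it to the diagonal via a permutation $P_{ij}$, and clears below it with eliminators $E_{ij}(\cdot)$. By Lemma \ref{elmat} all these matrices are bibounded, and since $A$ is invertible no pivot vanishes. The upshot is an upper-triangular $R = QA$, where $Q$ is a product of bibounded elementary matrices; by the ``if'' direction $Q$ is bibounded, and therefore so is $R = QA$.

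\textbf{The main obstacle} is the next step: upgrading the diagonal entries $r_{11},\ldots,r_{nn}$ of $R$ from ``bounded'' to ``bibounded''. Without this upgrade the subsequent upward-elimination pass would use multipliers $-r_{ij}/r_{jj}$ that need not be bounded. Since $R$ is bibounded and upper triangular, Proposition \ref{elementwise}(2) gives that $\det R = \prod_i r_{ii}$ is bibounded, while boundedness of $R$ gives each $r_{ii}$ bounded. Consequently, for each fixed $i$, the product $\prod_{j \ne i} r_{jj}$ is bounded, so $r_{ii}^{-1} = \bigl(\prod_{j \ne i} r_{jj}\bigr)/\det R$ is a bounded scalar divided by a bibounded scalar, hence bounded; therefore $r_{ii}$ is bibounded.

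\textbf{Conclusion.} With bibounded diagonal in hand, I would run the upward Gauss pass on $R$ using eliminators $E_{ij}(-r_{ij}/r_{jj})$ for $i < j$; the multipliers are bounded (bounded over bibounded), so these are bibounded by Lemma \ref{elmat}(1). This produces a bibounded diagonal matrix $D = Q' R$ with entries $r_{ii}$, which by Lemma \ref{elmat}(2) factors as a product of bibounded matrices of the form $E_i(r_{ii})$. Writing $A = Q^{-1}(Q')^{-1} D$ and using the fact that the three types of bibounded elementary matrices are closed under taking inverses ($E_{ij}(\alpha)^{-1} = E_{ij}(-\alpha)$, $E_i(\alpha)^{-1} = E_i(\alpha^{-1})$, $P_{ij}^{-1} = P_{ij}$), I obtain the required factorization of $A$ as a product of bibounded elementary matrices.
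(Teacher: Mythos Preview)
Your proof is correct and follows essentially the same route as the paper: a bibounded downward Gauss pass to reach an upper-triangular (hence bibounded) $R$, the observation via Proposition~\ref{elementwise} that bounded diagonal entries with bibounded product forces each diagonal entry to be bibounded, then a bibounded upward pass and a factorization of the resulting diagonal matrix. Your write-up is in fact somewhat more explicit than the paper's in spelling out why the diagonal entries are bibounded and why the inverses of the accumulated transformations are again products of bibounded elementary matrices.
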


\begin{remark}
For every $A \in M_{\ast,n}(F)$ and every row echelon form $B \in M_{\ast,n}(F)$,
\begin{center}
$A \succeq_n B$ iff $\left[ \begin{array}{c} B \\ A \end{array} \right] \sim_n \left[ \begin{array}{c} B \\ 0 \end{array} \right]$
iff bibounded Gauss on $\left[ \begin{array}{c} B \\ A \end{array} \right]$ gives $\left[ \begin{array}{c} B \\ 0 \end{array} \right]$.
\end{center}
\end{remark}

\section{Archimedean canonical forms}
\label{seccan}

We want to choose a canonical representative in each archimedean class of $M_{\ast,n}(F)$. 
This works for Laurent series fields but it does not work in general.

Let $\Gamma$ be a linearly ordered Abelian group and let $F = \RR((\Gamma))$ be the field of formal Laurent series
with real coefficients and with exponents in $\Gamma$. 
A row echelon form $C \in M_{\ast,n}(F)$ with pivots $c_{i,k_i}$, $i=1,\ldots,r$, is called an \textit{archimedean canonical form} if
it has no zero rows and there exist $m_1,\ldots,m_r \in \Gamma$ such that 
\begin{enumerate}
\item[(i)] $c_{i,k_i}=t^{m_i}$ and
\item[(ii)] for every $j<k_i$, $c_{j,k_i}$ has no terms with exponents $\ge m_i$.
\end{enumerate}
Explicitly,
$$C=\left[\begin{array}{ccccccccccc}
0 & \ldots & t^{m_1} & c_{1,k_1+1} & \ldots & c_{1,k_2} & c_{1,k_2+1} & \ldots & c_{1,k_3} & c_{1,k_3+1} & \ldots \\
0 & \ldots & 0 & 0 & \ldots & t^{m_2} & c_{2,k_2+1} & \ldots & c_{2,k_3} & c_{2,k_3+1} & \ldots \\
0 & \ldots & 0 & 0 & \ldots & 0 & 0 & \ldots & t^{m_3} & c_{3,k_3+1} & \ldots \\
\vdots &   & \vdots & \vdots &  & \vdots & \vdots &  & \vdots & \vdots & \ddots \\
\end{array}\right]$$

Note that an archimedean canonical form contains more information than just the information about
the archimedean classes of its entries.

\begin{prop}
For every nonzero $A \in M_{\ast,n}(F)$ there exists a unique archimedean canonical form $C \in M_{\ast,n}(F)$ such that $A \sim_n C$. 
\end{prop}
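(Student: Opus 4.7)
The plan is to prove existence by applying a bibounded Gauss--Jordan procedure to a row echelon representative of the class of $A$, and to prove uniqueness by analyzing the bibounded matrix that relates two archimedean canonical forms.

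For existence, apply Lemma~\ref{qr} to replace $A$ by a row echelon form $B \sim_n A$ with $r \ge 1$ nonzero rows and positive pivots $b_{i,k_i}$. Set $m_i := v_F(b_{i,k_i}) \in \Gamma$ and rescale row $i$ by $\alpha_i := t^{m_i}/b_{i,k_i}$; since $v_F(\alpha_i) = 0$, this is left multiplication by the bibounded elementary matrix $E_i(\alpha_i)$ (Lemma~\ref{elmat}(2)), so the archimedean class is preserved and each pivot is now $t^{m_i}$. Next, process pivot columns from left to right: for $i = 1, \dots, r$ and each row $j < i$, split the current entry as $b_{j,k_i} = b_{j,k_i}^{<m_i} + b_{j,k_i}^{\ge m_i}$ according to whether its Laurent exponents are below $m_i$, and subtract $\beta_{j,i} := b_{j,k_i}^{\ge m_i}/t^{m_i}$ times row $i$ from row $j$. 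Since $v_F(\beta_{j,i}) \ge 0$ the multiplier is bounded, so $E_{ji}(-\beta_{j,i})$ is bibounded (Lemma~\ref{elmat}(1)) and the archimedean class is again preserved; the new $(j,k_i)$-entry is $b_{j,k_i}^{<m_i}$, satisfying (ii). Because cleaning column $k_i$ modifies only columns $\ge k_i$ in rows above $i$, the pivots and the already-clean earlier pivot columns are not disturbed, so after $r$ passes the matrix is an archimedean canonical form equivalent to $A$.

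For uniqueness, suppose $C, C'$ are archimedean canonical forms with $C \sim_n C'$. Corollary~\ref{corshape} says they share the same shape, with pivot columns $k_1 < \dots < k_r$, and the same pivot valuations, forcing $m_i = m_i'$ and hence $c_{i,k_i} = t^{m_i} = c'_{i,k_i}$. Both matrices lie in $M_{r,n}(F)$, so Proposition~\ref{div2} supplies a bibounded $Q \in M_r(F)$ with $C = QC'$. We induct on $j$ to show that the $j$-th column of $Q$ equals $e_j$ and that $c_{i,k_j} = c'_{i,k_j}$ for all $i < j$: comparing the $k_j$-th columns of $C$ and $QC'$, and using the inductive hypothesis on the first $j-1$ columns of $Q$, gives $Q_{j,j} = 1$, $Q_{i,j} = 0$ for $i > j$, and $Q_{i,j} = (c_{i,k_j} - c'_{i,k_j})/t^{m_j}$ for $i < j$. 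The entries of $Q$ are bounded by Proposition~\ref{elementwise}, so $v_F(c_{i,k_j} - c'_{i,k_j}) \ge m_j$; but condition (ii) applied to both $C$ and $C'$ says $c_{i,k_j}$ and $c'_{i,k_j}$ have only terms with exponents strictly below $m_j$, so their difference has the same property. These two conditions force the difference to vanish, whence $Q_{i,j} = 0$, and the induction concludes with $Q = I_r$ and $C = C'$. The main obstacle is coordinating the bibounded row operations with the truncation condition~(ii): the choice $\beta_{j,i} = b_{j,k_i}^{\ge m_i}/t^{m_i}$ is precisely what keeps the multiplier bounded in the existence step and, symmetrically, what forces the only bibounded $Q$ compatible with~(ii) to be the identity in the uniqueness step.
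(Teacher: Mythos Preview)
Your proof is correct and follows essentially the same strategy as the paper's: reduce to a row echelon representative via Lemma~\ref{qr}, normalize pivots to $t^{m_i}$ by bibounded scalings, then clean above each pivot by truncation-based bibounded row operations; for uniqueness, invoke Corollary~\ref{corshape} and Proposition~\ref{div2} to get a bibounded $Q$ relating the two forms and compare pivot columns inductively to force $Q=I_r$. Your version is in fact slightly more explicit than the paper's at one point: in the inductive step you use Proposition~\ref{elementwise} to bound the entries of $Q$, which is exactly what justifies the paper's assertion that the right-hand side of its column equations has only exponents $\ge m_j$.
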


\begin{proof}
By Proposition \ref{qr} we may assume that $A$ is a row echelon form with no zero rows.
Each pivot $a_{i,k_i}$ of $A$ can be decomposed uniquely as $a_{i,k_i}=u_i t^{m_i}$ where $u_i$ is bibounded.
If we divide each nonzero row of $A$ with $u_i$ we get a matrix $B$ such that $B \sim_n A$ and $b_{i,k_i}=t^{m_i}$. Now we perform bibounded row
transformations of $B$ which use the pivot $t^{m_i}$ to kill all terms with exponents $\ge m_i$ in all entries above the pivot. 
The result is an archimedean canonical form $C$ such that $C \sim_n B$. This proves the existence part.

To prove uniqueness pick another archimedean canonical form $D \sim_n A$.
By Lemma \ref{ker}, $C$ and $D$ have the same rank and thus the same number of rows, say $r$.
By Corollary \ref{corshape}, $C \sim_n D$ implies that $C$ and $D$ have the same
shape and the same pivots in each row. By Proposition \ref{div2}, there exists a bibounded $Q \in M_r(F)$ such that
$D= Q C$. If $q_1,\ldots,q_r$ are the columns of $Q$ and $e_1,\ldots,e_r$ are the columns of $I_r$, 
then $D= Q C$ implies that
\begin{eqnarray}
\label{bla1}
t^{m_1} q_1 & =  & t^{m_1} e_1 \\
\label{bla2}
c_{1,k_2} q_1+t^{m_2} q_2 & = & d_{1,k_2} e_1+t^{m_2} e_2 \\
\label{bla3}
c_{1,k_3} q_1+c_{2,k_3}q_2+t^{m_3} q_3 & = & d_{1,k_3} e_1+d_{2,k_3} e_2+t^{m_3} e_3 \\
\notag & \vdots &
\end{eqnarray}
Equation \eqref{bla1} implies that $q_1=e_1$. Equation \eqref{bla2} implies that $(c_{1,k_2}-d_{1,k_2})e_1=t^{m_2}(e_2-q_2)$.
Since all powers that appear in the components of the left-hand side are $<m_2$ and all powers that appear
in the components of the right-hand side are $\ge m_2$, it follows that $c_{1,k_2}=d_{1,k_2}$ and $q_2=e_2$.
Similarly, equation \eqref{bla3}  implies that $(c_{1,k_3}-d_{1,k_3})e_1+(c_{2,k_3}-d_{2,k_3})e_2=t^{m_3}(e_3-q_3)$.
Since all powers that appear in the components of the left-hand side are $<m_3$ and all powers that appear
in the components of the right-hand side are $\ge m_3$, it follows that $c_{1,k_3}=d_{1,k_3}$, $c_{2,k_3}=d_{2,k_3}$ and $q_3=e_3$.
This process stops after $r$ steps and gives that $Q=I_r$. 
\end{proof}

\begin{remark}
Since every ordered field $F$ with value group $\Gamma$ can be $o$-embedded into $\RR((\Gamma))$ by Hahn's embedding theorem,
it is tempting to assume that archimedean canonical forms exist for matrices over general fields.
However, the problem is that to construct the archimedean canonical form we do not need just ring operations 
but also the operation of truncation which can take us out of the image of $F$ in $\RR((\Gamma))$.
If we want the image of $F$ to be truncation closed we must pass to real closures.
Let $\bar{F}$ be the real closure of $(F,\le)$ and let $\bar{\Gamma}$ be he division hull of $\Gamma$.
In \cite{kuhlmann} it is shown that there exists an $o$-embedding $\psi \colon \bar{F} \to \RR((\bar{\Gamma}))$
such that $\psi(\bar{F})$ is truncation closed in $\RR((\bar{\Gamma}))$ and $\psi(F)$ is contained in $\RR((\Gamma))$.
Pick now any $A \in M_n(F)$ and compute the archimedean canonical form $C$ of $\psi_n(A):=[\psi(a_{ij})]$ in $M_n(\RR((\Gamma)))$.
The properties of $\psi$ imply that there exists $B \in M_n(\bar{F})$ such that $\psi_n(B)=C$.
Since $\psi_n$ is an o-embedding, it follows that $B \sim_n A$ in $M_n(\bar{F})$.
So $C$ is a representative of the archimedean class of $A$ in $M_n(\bar{F})$.

This is not very useful because in $M_n(\bar{F})$ we have better representatives of archimedean classes.
Namely, if $\sum_i \sigma_i P_i$ is the spectral decomposition of $A^T A$ and $\tau_i$
are representatives of archimedean classes of $\sqrt{\sigma_i}$, then $A \sim \sum_i \tau_i P_i$.
\end{remark}

\section{Can we multiply archimedean classes?}
\label{seclast}

We already know that the relation $\sim_n$ is not a congruence relation on the multiplicative semigroup on $M_n(F)$.
More precisely, $A \sim_n B$ always implies $AC \sim_n BC$ but it does not always imply $CA \sim_n CB$.
Therefore, the factor set $M_n(F)/\!\sim_n=\Gamma_n \cup \{\infty\}$ is not a semigroup in the usual way.
We can address this issue in at least three different ways:
\begin{enumerate}
\item We give up on the multiplicative structure of $M_n(F)/\!\sim_n$ and consider the following construction instead.
For every $C \in M_n(F)$, the mapping $$\phi_C \colon M_n(F)/\!\sim_n \to M_n(F)/\!\sim_n, \quad \phi_C(v_n(A)):=v_n(AC)$$ is order-preserving.
The mapping $\phi \colon C \mapsto \phi_C$ from the multiplicative semigroup of $M_n(F)$ to the semigroup
of order-preserving maps from $M_n(F)/\!\sim_n$ to $M_n(F)/\!\sim_n$ is a homomorphism of semigroups.
\item We define some unnatural multiplication on $M_n(F)/\!\sim_n$; see section \ref{secmult}.
\item We modify the definition of the relation $\sim_n$; see section \ref{secgg}.
\end{enumerate}

\subsection{An unnatural multiplication}
\label{secmult}

We will define a multiplication on $M_n(F)/\sim_n$. Although the definition
is unappealing its properties are very good.

\begin{prop}
The operation $\square$ on $M_n(F)$ defined by
$$A \square B := \m{A} \m{B} I_n$$
has the following properties:
\begin{enumerate}
\item It is associative and commutative.
\item If $A \succeq_n B$ for some $A,B \in  M_n(F)$ then $A \square C \succeq_n B \square C$ for every $C \in  M_{\ast,n}(F)$.
In particular $\sim_n$ is a congruence relation on  $(M_n(F),\square)$.
\item $v_n$ is surmultiplicative, i.e. $v_n(A  B) \succeq_n  v_n(A) \square v_n(B) := v_n(A \square B)$ 
for every $A\in M_n(F)$ and every $B \in  M_n(F)$.
\item $\phi$ is multiplicative, i.e. $\phi(v_n(A) \square v_n(B))=\phi(v_n(A))\phi(v_n(B))$ for every $A, B \in  M_n(F)$.
\item $\wedge_n$ is distributive over $\square$, i.e. $(v_n(A) \wedge_n v_n(B)) \square v_n(C) = (v_n(A) \square v_n(C)) \wedge_n (v_n(B) \square v_n(C))$
for every $A,B,C \in M_n(F)$.
\item $\vee_n$ is not distributive over $\square$.
\end{enumerate}
\end{prop}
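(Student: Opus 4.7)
The plan is to verify the six claims in sequence. The single piece of technology that does most of the work is the implication $A \succeq_n B \Rightarrow \m{A} \succeq_F \m{B}$: from $A^T A \le_n r B^T B$ one gets $\tr A^T A \le_F r \tr B^T B$, and \eqref{ncvaleq2}--\eqref{ncvaleq3} then convert this into $v_F(\m{A}) \succeq v_F(\m{B})$. This lets me move any $\succeq_n$-statement about the scalar matrices $\m{A}\m{B}I_n$ down to $F$, where I can manipulate max-norms freely. Claim (1) is immediate since $\|\alpha I_n\|_\infty = |\alpha|$, so $\square$ is really just commutative, associative multiplication in $F$ applied to max-norms. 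For (2), multiplying $\m{A}^2 \le_F r \m{B}^2$ by $\m{C}^2$ yields $\m{A}\m{C} \succeq_F \m{B}\m{C}$, which by the identification of $F$ with scalar matrices from subsection \ref{ncvalsec} is exactly $A \square C \succeq_n B \square C$; the congruence claim follows by symmetry and commutativity.

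For (3), I would invoke Proposition \ref{div1}: the target $AB \succeq_n \m{A}\m{B}\, I_n$ amounts to factoring $AB = K \cdot \m{A}\m{B}\, I_n$ with bounded $K$. When $\m{A}\m{B} \ne 0$, take $K := (\m{A}\m{B})^{-1} AB$; since each $(AB)_{ij}$ is a sum of $n$ products of entries of $A$ and $B$, its absolute value is at most $n\m{A}\m{B}$, so every entry of $K$ is bounded by the integer $n$, and Proposition \ref{elementwise} finishes the job. The case $\m{A}\m{B} = 0$ is trivial because then $AB = 0$. Claim (4) is a direct computation using the commutative diagram of subsection \ref{maincasesec}: $\phi(v_n(A) \square v_n(B)) = v_F(\m{A}\m{B}) = v_F(\m{A}) + v_F(\m{B}) = \phi(v_n(A)) + \phi(v_n(B))$.

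For (5), both sides reduce to classes of scalar matrices via Proposition \ref{lat2}: using $\m{[A;B]} = \max(\m{A},\m{B})$, the LHS has representative $\max(\m{A},\m{B})\m{C}\, I_n$, while the RHS is the meet of the scalar matrices $\m{A}\m{C}\, I_n$ and $\m{B}\m{C}\, I_n$, represented by the stacked matrix whose Gram is $(\m{A}^2 + \m{B}^2)\m{C}^2 I_n$; the chain $\max(x,y) \le x+y \le 2\max(x,y)$ shows the two Gram matrices are $\sim_n$-equivalent. For (6), a counterexample lives already in $M_2(F)$: take $A=E_{11}, B=E_{12}, C=I_2$. A direct kernel argument (or the computation $E_{11} \cdot I_2^{-1} \cdot E_{22} = 0$ for the parallel sum of Proposition \ref{lat1}) shows $v_2(A) \vee_2 v_2(B) = \infty$, so the LHS equals $\infty \square v_2(I_2) = v_2(0) = \infty$, whereas $E_{11} \square I_2 = E_{12} \square I_2 = I_2$ yields RHS $= v_2(I_2) \ne \infty$. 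The main obstacle across the six parts is not any one calculation but the discipline of moving cleanly between matrix representatives and archimedean classes, particularly in (5), where one must recognise that the meet of two scalar matrices is not again a scalar matrix on the nose but only up to the archimedean equivalence $a^2 + b^2 \sim_F \max(a^2, b^2)$.
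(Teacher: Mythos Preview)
Your proof is correct and follows essentially the same route as the paper. The only minor variation is in claim (3): you factor $AB = K(\m{A}\m{B}I_n)$ and bound $K$ entrywise via Propositions~\ref{div1} and~\ref{elementwise}, whereas the paper first records the inequality $A \succeq_n \m{A}I_n$ (from $A^TA \le_n (\tr A^TA)I_n \le_n n^2\m{A}^2 I_n$) and then chains $AB \succeq_n (\m{A}I_n)B \succeq_n \m{A}\m{B}I_n$. Your counterexample in (6) with $A=E_{11}$, $B=E_{12}$ is a trivial variant of the paper's choice $A=E_{11}$, $B=E_{22}$, since $E_{12}^TE_{12}=E_{22}$ and only $A^TA$, $B^TB$ enter the parallel-sum computation.
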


\begin{proof}
Claim (1) is clear since $(A \square B) \square C = \m{A} \m{B} \m{C} I_n = A \square (B \square C).$
Claim (2) follows from the fact that $A \succeq_n B$ implies $\m{A} \succeq \m{B}$. 

For every $A \in M_{\ast,n}(F)$ we have that $A^T A \le_n (\tr A^T A)I_n \le n^2 \m{A}^2 I_n$, so
\begin{equation}
\label{maxeq}
A \succeq_n \m{A} I_n.
\end{equation}
Claim (3) follows from inequality \eqref{maxeq}. Namely,
$AB \succeq_n (\m{A}I_n)B=B(\m{A}I_n) \succeq_n (\m{B}I_n)(\m{A}I_n)$.

Claim (4) follows from $\phi(v_n(A) \square v_n(B))=\phi(v_n(A \square B))=v(\m{A \square B})=v(\m{A} \m{B})
= v(\m{A})v(\m{B})=\phi(v_n(A))\phi(v_n(B))$.

If we multiply $$\max\{\m{A},\m{B}\}^2 \le \m{A}^2+\m{B}^2 \le 2\max\{\m{A},\m{B}\}^2$$ with $\m{C}^2$ and we use 
$\m{ \left[\begin{array}{c} A \\ B \end{array} \right] }=\max\{\m{A},\m{B}\}$ we get that
$$\left[\begin{array}{c} A \\ B \end{array} \right] \square C \sim_n \left[\begin{array}{c} A \square C \\ B \square C \end{array} \right].$$
This implies claim (5). 

To prove claim (6), take 
$$A=\left[\begin{array}{cc} 1 & 0 \\ 0 & 0 \end{array} \right], \quad B=\left[\begin{array}{cc} 0 & 0 \\ 0 & 1 \end{array} \right]$$
and any $C$ with $\m{C}=1$. Then
$$(v_2(A) \vee_2 v_2(B)) \square v_2(C)  = 0 \square C=0$$
and
$$(v_2(A) \square v_2(C)) \vee_2 (v_2(B) \square v_2(C))=I_2 \vee_2 I_2=I_2. \qedhere$$
\end{proof}

It would be interesting to know if there exists a multiplication on $\Gamma_n$ such that we also have distributivity for $\vee_n$.

\subsection{A variant of relations $\succeq_n$ and $\sim_n$}
\label{secgg}

Let $F$ be an ordered field and $n$ an integer. For every $A,B \in M_n(F)$ we write
\begin{center}
$A \gg_n B$ iff $C A \succeq_n C B$ for every $C \in M_n(F)$. \\
$A \equiv_n B$ iff $A \gg_n B$ and $B \gg_n A$.
\end{center}
It is clear that $\equiv_n$ is a congruence relation on the multiplicative semigroup of $M_n(F)$ 
and that the congruence class of zero is a singleton.

\begin{prop}
\label{propgg}
For every $A,B \in M_n(F)$, the following are equivalent.
\begin{enumerate}
\item $A \gg_n B$.
\item For every $y \in F^n$ there exists $r \in \NN$ such that $A^T yy^T A \le_n r B^T yy^T B$.
\item For every $y \in F^n$ there exists a bounded $\alpha_y \in F$ such that $y^T A= \alpha_y y^T B$.
\item There exists a bounded $\alpha \in F$ such that $A=\alpha B$.
\end{enumerate}
\end{prop}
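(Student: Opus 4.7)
The plan is to prove the four conditions equivalent via the cycle (4) $\Rightarrow$ (1) $\Rightarrow$ (2) $\Leftrightarrow$ (3) $\Rightarrow$ (4). The first two implications should be routine. For (4) $\Rightarrow$ (1), if $A = \alpha B$ with $\alpha^2 \le r$ for some $r \in \NN$, then $(CA)^T(CA) = \alpha^2 (CB)^T(CB) \le_n r (CB)^T(CB)$ for every $C \in M_n(F)$, so $CA \succeq_n CB$. For (1) $\Rightarrow$ (2), given $y \in F^n$ I would pick $C \in M_n(F)$ whose first row is $y^T$ and whose other rows are zero, so that $C^T C = yy^T$; the defining inequality for $CA \succeq_n CB$ then unpacks to exactly $A^T yy^T A \le_n r B^T yy^T B$.

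For (2) $\Leftrightarrow$ (3), I would rewrite condition (2) as $(y^T A)^T(y^T A) \le_n r (y^T B)^T(y^T B)$, which is by definition $y^T A \succeq_n y^T B$ with both sides viewed as elements of $M_{1,n}(F) \subseteq M_{\ast,n}(F)$. Proposition \ref{div1}, applied with $k = l = 1$, then converts this into the existence of a bounded scalar $\alpha_y \in M_{1,1}(F) = F$ with $y^T A = \alpha_y y^T B$.

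The hard part is (3) $\Rightarrow$ (4): consolidating the pointwise scalars $\alpha_y$ into a single global $\alpha$. If $B = 0$, then (3) forces $y^T A = 0$ for every $y$, so $A = 0$ and any bounded $\alpha$ works. Otherwise I would fix $y_0$ with $y_0^T B \ne 0$, set $\alpha := \alpha_{y_0}$ (automatically bounded), and show that $y^T(A - \alpha B) = 0$ for every $y \in F^n$. The argument splits on whether the row vectors $y^T B$ and $y_0^T B$ are linearly dependent in $F^n$. In the dependent case $y^T B = c\, y_0^T B$, applying (3) to $y - c y_0$ kills $(y - c y_0)^T B$ and hence $(y - c y_0)^T A$, which rearranges to $y^T A = c\, \alpha\, y_0^T B = \alpha\, y^T B$. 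In the independent case I would compute $(y + y_0)^T A$ in two ways: directly as $\alpha_y y^T B + \alpha\, y_0^T B$, and via (3) applied to $y + y_0$ as $\alpha_{y + y_0}(y^T B + y_0^T B)$; linear independence then forces $\alpha_y = \alpha_{y + y_0} = \alpha$. In either case $y^T A = \alpha\, y^T B$ for every $y$, so $A = \alpha B$.

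The main obstacle I expect is exactly this last step: a priori the $\alpha_y$ depend on $y$, and the subtlety is that the purely algebraic identity $y^T A = \alpha_y y^T B$ only determines $\alpha_y$ uniquely when $y^T B \ne 0$, so one cannot simply argue that $y \mapsto \alpha_y$ is linear. The case-split on linear dependence of $y^T B$ and $y_0^T B$ is what bridges this gap without needing any extra boundedness machinery.
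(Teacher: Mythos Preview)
Your proof is correct and matches the paper's for the easy implications $(4)\Rightarrow(1)\Rightarrow(2)$; your reduction of $(2)\Leftrightarrow(3)$ to Proposition~\ref{div1} with $k=l=1$ is a clean reformulation of the paper's direct argument about rank-one inequalities $ww^T \le_n r\,zz^T$.

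The genuine difference is in $(3)\Rightarrow(4)$. The paper observes that (3) says $A^T$ and $B^T$ are \emph{locally linearly dependent} and invokes the Bre\v{s}ar--\v{S}emrl theorem \cite[Theorem~2.3]{bs} to conclude that either $A^T,B^T$ are globally linearly dependent or both have rank at most one with range in a common line; each case is then finished by hand. Your argument is entirely elementary: fix a base vector $y_0$ with $y_0^T B\neq 0$, set $\alpha:=\alpha_{y_0}$, and for arbitrary $y$ split on whether $y^T B$ and $y_0^T B$ are linearly dependent in $F^n$, using the additivity trick $(y+y_0)^T A$ in the independent case to force $\alpha_y=\alpha$. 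This is self-contained, avoids the external reference, and in effect reproves the two-operator case of the local linear dependence theorem from scratch. The paper's route situates the step in a broader context; yours is shorter and keeps the argument internal to the paper.
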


\begin{proof}
(1) implies (2). Just replace $C$ with a matrix whose first row is $y^T$ and other rows are zero.

(2) implies (3). Use that for every vectors $w,z \in F^n$ which satisfy $ww^T \le_n r zz^T$
for some $r \in F$ there exists $\alpha \in F$ such that $w=\alpha z$ and $\alpha^2 \le r$.

(3) implies (4). By assumption $A^T$ and $B^T$ are locally linearly dependent. 
By Theorem 2.3 in \cite{bs} either $A^T$ and $B^T$ are linearly dependent or their exists a nonzero $v \in F^n$
such that $\im A^T $ and $\im B^T$ are subsets of the span of $v$. If either $\rank A>1$ or $\rank B >1$
then we are in the first case. Consequently $A= \alpha B$ for some $\alpha$.
If $\rank A \le 1$ and $\rank B \le 1$ then there exist $a,b \in F^n$ such that 
$A^T=v a^T$ and $B^T= v b^T$. By (3), for every $y \in F^n$ there exists a bounded $\alpha_y \in F$
such that $(y^T a) v^T =\alpha_y(y^T b) v^T$. Since $v \ne 0$, it follows that $y^T a=\alpha_y y^T b$.
Therefore $y^T b=0$ implies $y^T a=0$ for every $y \in F^n$. In particular, $a= \alpha b$.
Therefore $A=\alpha B$ in this case, too. By (3), $\alpha$ must be bounded.

(4) implies (1). This is clear.
\end{proof}

The following is now clear:

\begin{cor}
For every matrices $A,B \in M_n(F)$ we have that
$A \equiv_n B$ iff either $A=B=0_n$ or if $A \ne 0_n$ and $B \ne 0_n$ and 
$\frac{1}{\Vert A \Vert_\infty} A=\frac{1}{\Vert B \Vert_\infty}B$ and $v_F(\Vert A \Vert_\infty)
= v_F(\Vert B \Vert_\infty)$. In other words, the factor set $M_n(F)^{\ne 0_n}/ \equiv_n$ is isomorphic to
$K_n \times \Gamma_F$ where $K_n=\{A \in M_n(F) \mid \Vert A \Vert_\infty=1\}$.
\end{cor}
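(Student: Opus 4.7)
The plan is to unpack the corollary directly from Proposition~\ref{propgg}(4). That proposition already identifies $A \gg_n B$ with the existence of a bounded $\alpha \in F$ such that $A = \alpha B$, so $A \equiv_n B$ is equivalent to the existence of bounded $\alpha,\beta \in F$ with $A = \alpha B$ and $B = \beta A$. The zero case is immediate: if $A = 0_n$ then the relation $B = \beta A$ forces $B = 0_n$, and symmetrically, so $\{0_n\}$ is its own $\equiv_n$-class.

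For the nonzero case I would proceed as follows. Assume $A,B \ne 0_n$ and $A \equiv_n B$. Then the scalars $\alpha,\beta$ supplied by Proposition~\ref{propgg}(4) are nonzero and satisfy $\alpha\beta = 1$, so $\alpha^{-1} = \beta$ is also bounded. Hence $\alpha$ is bibounded in $F$, i.e.\ $v_F(\alpha) = 0$, whence $v_F(|\alpha|) = 0$ as well. Taking the max-norm of $A = \alpha B$ yields $\|A\|_\infty = |\alpha|\cdot\|B\|_\infty$, which gives $v_F(\|A\|_\infty) = v_F(\|B\|_\infty)$, and dividing the identity $A = \alpha B$ by $\|A\|_\infty = |\alpha|\,\|B\|_\infty$ yields $\tfrac{1}{\|A\|_\infty}A = \tfrac{\alpha}{|\alpha|}\cdot\tfrac{1}{\|B\|_\infty}B$, i.e.\ the stated equality (up to the sign factor $\alpha/|\alpha|$, which one either absorbs into $K_n$ by identifying $\pm 1$ or notes as a separate case).

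Conversely, if $A,B$ are nonzero, the normalized matrices agree, and the valuations $v_F(\|A\|_\infty) = v_F(\|B\|_\infty)$ are equal, then $A = cB$ with $c := \|A\|_\infty/\|B\|_\infty$ a scalar of valuation $0$, hence bibounded; Proposition~\ref{propgg}(4) then gives both $A \gg_n B$ and $B \gg_n A$, so $A \equiv_n B$. For the final isomorphism statement, the map $[A] \mapsto (A/\|A\|_\infty,\,v_F(\|A\|_\infty))$ from $M_n(F)^{\ne 0_n}/\!\equiv_n$ to $K_n \times \Gamma_F$ is well-defined by the forward direction, injective by the converse, and surjective by choosing any $a \in F$ with $v_F(a) = \gamma$ and sending $(M,\gamma)$ back to the class of $aM$.

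I do not expect any serious obstacle: once Proposition~\ref{propgg}(4) is available the argument is pure bookkeeping with valuations of scalars. The only point that needs care is the sign factor $\alpha/|\alpha|$ in the forward direction, which is a minor compatibility issue between the literal form of the statement and the fact that $\succeq_n$ (and hence $\equiv_n$) is insensitive to multiplication by $-1$.
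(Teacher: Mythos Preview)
Your approach is exactly what the paper has in mind; the paper itself offers no proof, declaring the statement ``clear'' from Proposition~\ref{propgg}. The bookkeeping you carry out (bibounded $\alpha$, then compare norms and normalizations) is the natural and only reasonable route.

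You are also right to flag the sign issue, and this is not a defect in your argument but a genuine slip in the paper's statement. Since $\succeq_n$ depends only on $A^TA$, one always has $A \equiv_n -A$; concretely, $\alpha = -1$ witnesses both $A \gg_n -A$ and $-A \gg_n A$ via Proposition~\ref{propgg}(4). But for $A \ne 0$ one has $\tfrac{1}{\|A\|_\infty}A = -\tfrac{1}{\|{-A}\|_\infty}(-A)$, so the literal equality of normalized matrices fails. The corollary as written is therefore false as stated: the correct target is $(K_n/\{\pm 1\}) \times \Gamma_F$, or equivalently one should require the normalized matrices to agree up to a global sign. Your parenthetical remark about ``absorbing'' the sign or ``identifying $\pm 1$'' is exactly the needed fix.
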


\end{document}